\newcommand {\C} [1] {{\mathbb C}^{#1}}
\newcommand {\R} [1] {{\mathbb R}^{#1}}
\newcommand {\pdd} [1] {\displaystyle{\frac{\partial }{\partial
{#1}}}}
\newcommand {\real} {\mbox{Re}}
\newcommand {\imag} {\mbox{Im}}
\newtheorem{theorem}{Theorem}[section]
\newtheorem{lemma}[theorem]{Lemma}
\newtheorem{cor}[theorem]{Corollary}
\newtheorem{prop}[theorem]{Proposition}
\newtheorem{definition}[theorem]{Definition}
\newtheorem{remark}[theorem]{Remark}
\numberwithin{equation}{section}
\begin{document}
\title{The Szeg\"o Kernel for Non-Pseudoconvex Tube Domains in $\C{2}$}
\author{Michael Gilliam and Jennifer Halfpap}
\thanks{The second author was partially supported by NSF grant DMS 0654195.}
\maketitle

\begin{abstract}
We consider the Szeg\"o kernel associated with domains $\Omega$ in
$\C{2}$ given by
$$\Omega=\{\,(z,w):\imag \,w > b(\real \,z)\,\} $$
for $b$ a {\it non-convex} polynomial of even degree with positive
leading coefficient. Such domains are not pseudoconvex.  We give a
precise description of a subset of $\overline{\Omega}\times
\overline{\Omega}$ on which the kernel and all of its derivatives
are finite.  We show, in particular, that for such domains, the
Szeg\"o kernel has singularities off the diagonal of $\partial
\Omega \times
\partial \Omega$ as well as points on the diagonal at which it is
finite.
\end{abstract}

MSC2010: 32T99, 42B20.

Keywords: Szeg\"o kernel, non-pseudoconvex domains.

\section{Introduction}

Let $\Omega \subset \C{n}$ be a domain with smooth boundary
$\partial \Omega$, and let $\mathcal{O}(\Omega)$ denote the space
of holomorphic functions on $\Omega$.  Associated with such a
domain are certain operators: the Bergman projection $\mathcal{B}$
and the Szeg\"o projection $\mathcal{S}$. The former is the
orthogonal projection of $L^2(\Omega)$ onto the closed subspace
$L^2(\Omega) \cap \mathcal{O}(\Omega)$, whereas the latter is the
orthogonal projection of $L^2(\partial \Omega)$ onto the closed
subspace $\mathcal{H}^2(\Omega)$ of boundary values of elements of
$\mathcal{O}(\Omega)$. An important goal of research on these
operators is to obtain results concerning their mapping properties
(e.g., conditions under which they extend to bounded operators on
the appropriate $L^p$ spaces).

Often, understanding these operators begins with an investigation
of the associated integral kernel; one identifies distributions
$B$ and $S$ such that for $f \in L^2 (\Omega)$ and $g\in
L^2(\partial \Omega)$,
$$\mathcal{B}[f](z)=\int_{\Omega} f(w) B(z,w) \,dw$$
$$\mathcal{S}[g](z)=\int_{\partial \Omega}g(w) S(z,w) \,d\sigma(w).$$
A first step in the analysis of these kernels is to describe the
subset of $\partial \Omega \times \partial \Omega$ to which they
(and their derivatives) extend continuously. An early result of
this sort is due to Kerzman \cite{Kerzman:72}, who uses Kohn's
formula to show that for $\Omega \subseteq \C{n}$ bounded and
strongly pseudoconvex, $B$ and its derivatives extend continuously
to $(\overline{\Omega}\times \overline{\Omega})\setminus \Delta$,
where $\Delta=\{\,(z,w)\in
\partial \Omega \times\partial \Omega : z=w\,\}$ is the diagonal of the boundary.

A further step in the analysis is to obtain sharp size estimates
for these kernels and their derivatives near their singularities,
together with mapping properties of the associated operators. This
is done, for instance, by Nagel, Rosay, Stein, and Wainger
\cite{NRSW:89} for finite-type domains in $\C{2}$ and by McNeal
and Stein (\cite{McNeal:94},
\cite{McNealStein:94},\cite{McNealStein:97}) for convex domains in
$\C{n}$.

In contrast with the situation for pseudoconvex domains,
comparatively little is known about the Szeg\"o kernel for
non-pseudoconvex domains, even in $\C{2}$. Consider
\begin{equation}\label{tube domain}
\Omega =\{\,(z_1=x+iy,z_2=t+i\xi): \xi > b(x)  \,\},
\end{equation}
for a real-valued smooth function $b$ satisfying $\lim_{|x| \to
\infty}b(x)/|x|=\infty$. This domain is pseudoconvex precisely
when $b$ is convex. Some of the first results concerning the
Szeg\"o kernel in the non-pseudoconvex context are due to
Carracino (\cite{CarracinoPHD}, \cite{Carracino:07}). She obtains
detailed estimates for the Szeg\"o kernel on the boundary of a
model domain of the type \eqref{tube domain} with $b$ a
non-smooth, non-convex, piecewise quadratic function.  She shows
that the Szeg\"o kernel has singularities off of $\Delta$ in this
case. Then in \cite{GH:11}, the current authors identify a subset
of $\overline{\Omega} \times \overline{\Omega}$ on which the
integrals defining the Szeg\"o kernel and its derivatives are
absolutely convergent for the case in which $b$ is a non-convex
quartic polynomial.  In particular, this work shows that there are
points {\it on the diagonal} $\Delta$ at which the Szeg\"o kernel
is finite as well as points {\it off the diagonal} at which it is
infinite.

In this paper, we explore this phenomenon in the much more general
setting in which $b$ is a non-convex even-degree polynomial with
positive leading coefficient.  Without loss of generality, we may
suppose
\begin{equation}\label{our b}
b(x)=\frac{1}{2n}x^{2n}+\sum_{j=2}^{2n-1} a_j x^j, \quad n \ge 2.
\end{equation}
Although the statements of the theorems in this paper closely
resemble those in \cite{GH:11}, the technical challenges in
proving the theorems are rather different.  We will comment on
these substantial differences in due course.

We close this introductory section with a note on the motivations
for studying {\it non-pseudoconvex} domains. To begin with, we are
motivated by an interest in singular integral operators.  The
Szeg\"o kernel for a pseudoconvex domain of finite type is an
example of a non-isotropic smoothing operator; this is a class of
operators that is well-understood.  (See \cite{NRSW:89}).
Carracino's work shows that the structure of the singularities of
the Szeg\"o kernel can be very different in the non-pseudoconvex
setting, but it is inconclusive on the question of whether these
kernels are related to flag kernels \cite{NRS:01} or product
singular integral operators \cite{NS:04}.

A second motivation for exploring the non-pseudoconvex setting
arises because of its possible connection to the perhaps more
natural problem of understanding the Szeg\"o projection operator
associated with a CR manifold of higher codimension such as the
tube model
$$T_4=\{\,(z,w_2,w_3,w_4):\real \, w_j= [\real \,z]^j\,\}.$$
One can derive an expression analogous to \eqref{Szego kernel on
R3 X R3} for the kernel associated with the orthogonal projection
of $L^2(T_4)$ onto the subspace of functions annihilated by the
tangential Cauchy-Riemann operators.  The resulting integral is
even more complicated in that setting; nonetheless, many of the
technical challenges arising in that setting arise in the
non-pseudoconvex setting as well.  The analysis in the
non-pseudoconvex setting can thus guide some of the analysis for
higher-codimensional CR manifolds.

\section{Definitions, Notation, and Statement of Results}

We begin with a more precise discussion of the Szeg\"o projection
operator and its associated integral kernel for domains in $\C{2}$
having the form \eqref{tube domain}. We take $b$ smooth so that
$\Omega \subset \C{2}$ is smoothly-bounded. As above, let
$\mathcal{O}(\Omega)$ denote the space of functions holomorphic on
$\Omega$.  Define
$$\mathcal{H}^2(\Omega):=\left\{\,F\in \mathcal{O}(\Omega): \sup_{\varepsilon > 0} \int_{\partial \Omega} |F(x+iy, t+ib(x)+i\varepsilon)|^2\, dx\,dy\,dt<\infty\,\right\}.$$
$\mathcal{H}^2(\Omega)$ can be identified with the set of all
functions $f$ in $L^2(\partial \Omega)$ (which is itself
identified with $L^2(\R{3})$) which are solutions in the sense of
distributions to
\begin{equation}\label{CR equation}
\left(\pdd{x} + i\pdd{y} -ib'(x) \pdd{t} \right)[f] \equiv 0.
\end{equation}
We define the {\it Szeg\"o projection operator $\mathcal{S}$} to
be the orthogonal projection of $L^2(\partial \Omega)$ onto this
(closed) subspace $\mathcal{H}^2(\Omega)$.

One establishes the existence of a unique integral kernel
associated with the operator.  This is discussed, for example, in
\cite{St(BB):72}, where the approach is as follows: Begin with an
orthonormal basis $\{\phi_j\}$ for $\mathcal{H}^2(\Omega)$ and
form the sum
$$S(z,w)=\sum_{j=1}^{\infty} \phi_j(z)\overline{\phi_j(w)}.$$
One shows that this converges uniformly on compact subsets of
$\Omega \times \Omega$, that $\overline{S(z , \cdot)}\in
\mathcal{H}^2(\Omega)$ for each $z \in \Omega$, and that for $g
\in \mathcal{H}^2(\Omega)$,
$$g(z)=\int_{\partial \Omega} S(z,w) g(w)\,d\sigma(w).$$
$S$ is then the {\it Szeg\"o kernel}.  From its construction it is
clear that it will be smooth on $\Omega \times \Omega$.  It may
extend to a smooth function on some larger subset of
$\overline{\Omega} \times \overline{\Omega}$.

For domains of the form \eqref{tube domain}, one can derive an
explicit formula for the Szeg\"o kernel.  Let $z=(z_1, z_2)$ and
$w=(w_1, w_2)$ be elements of $\C{2}$.  Set
\begin{equation}\label{N(eta,tau)}
N(\eta, \tau)=\int_{-\infty}^\infty e^{2\tau[\eta \lambda -
b(\lambda)]}\,d\lambda.
\end{equation}
Then
\begin{equation}\label{Szego kernal inside}
S(z,w)=c\int \!\!\!\int_{\tau > 0} \tau e^{\eta \tau [z_1 +
\bar{w}_1]+i\tau[z_2 - \bar{w}_2]} [N(\eta, \tau)]^{-1} \,d\eta
\,d\tau,
\end{equation}
where $c$ is an absolute constant.
\begin{remark}
See \cite{HNW:09} for detailed discussions of $\mathcal{H}^p$
spaces for unbounded domains, the derivations of such integral
formulas, and the identification of $\mathcal{H}^2(\Omega)$ with
$L^2(\partial \Omega)(=L^2(\R{3}))$ functions satisfying the
differential equation \eqref{CR equation}.
\end{remark}
\begin{remark}
Many authors only consider $S$ as a distribution on $\partial
\Omega \times
\partial \Omega$ since $S$ is smooth on $\Omega \times \Omega$.  In this situation, one can identify the boundary with $\R{3}$ and
consider the integral kernel
\begin{eqnarray}\label{Szego kernel on R3 X R3}
\lefteqn{S[(x,y,t),(r,s,u)]=}\nonumber\\
& &c\int_0^\infty \!\!\!\int_{-\infty}^\infty \tau
e^{\tau[i(t-u)+i \eta (y-s)-[b(x)+b(r)-\eta(x+r)]]}\left[N(\eta,
\tau)\right]^{-1} \,d\eta \,d\tau.\label{szego kernel for tube}
\end{eqnarray}
This is done, for example, in the work of Nagel
\cite{Nag(Beijing):86}, Haslinger \cite{Haslinger:95}, and
Carracino \cite{CarracinoPHD}, \cite{Carracino:07}.
\end{remark}

We may now state our results:

Let $b$ be as in \eqref{our b}. For each real $\eta$, set
$B_\eta(x):=-\eta x + b(x)$. The set of minimizers of this
function is of vital importance in our analysis.  Thus we define
\begin{equation}\label{Lambda}
\Lambda_\eta=\{\,\lambda:\inf_x B_\eta(x)=B_\eta(\lambda)\,\},
\end{equation}
with $\Lambda := \bigcup_\eta \Lambda_\eta$ and
$\mathcal{C}=\{\,\eta:|\Lambda_\eta|>1\,\}$.  Furthermore, set
\begin{gather}
z=(z_1,z_2)=(x+iy, t+ib(x)+i h)\\
w=(w_1,w_2)=(r+is,u+ib(r)+i k),
\end{gather}
and define
\begin{equation}\label{sigma}
\Sigma=\{\,(z,w): x=r \;\text{and}\; x\in\Lambda\,\}\cup
\{\,(z,w): x,r \in \Lambda_c \;\text{for}\; c\in\mathcal{C} \,\}.
\end{equation}
Finally, for a function $b$ continuous on $\mathbb{R}$, define the
{\it Legendre transform} of $b$ by
\begin{equation}
b^*(\eta):=\sup_{x \in \mathbb{R}} [\eta x - b(x)]=-\inf_x
B_\eta(x).
\end{equation}
\begin{theorem}\label{theorem: absolute convergence}
The integral defining $S(z,w)$ is absolutely convergent in the
region in which
\begin{equation}\label{region for abs conv}
h+k+b(x)+b(r)-2b^{**}\left(\frac{x+r}{2}\right)>0.
\end{equation}
This is an open neighborhood of $(\overline{\Omega}\times
\overline{\Omega})\setminus \Sigma$. More generally, if $i_1$,
$j_1$, $i_2$, and $j_2$ are non-negative integers, then
\begin{equation}\label{derivatives of S}
\partial^{i_1}_{z_1} \partial^{j_1}_{\bar{w_1}} \partial^{i_2}_{z_2}
\partial^{j_2}_{\bar{w_2}}S(z,w)=c' \int \!\!\! \int_{\tau>0} e^{\eta \tau
[z_1+\bar{w_1}]+i\tau[z_2-\bar{w_2}]}\frac{\eta^{i_1+j_1}
\tau^{i_1+j_1+i_2+j_2+1}}{N(\eta,\tau)}\,d\eta\,d\tau
\end{equation}
is absolutely convergent in the same region.
\end{theorem}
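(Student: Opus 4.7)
The plan is to bound $|S(z,w)|$ by inserting a lower bound on $N(\eta,\tau)$ into the formula \eqref{Szego kernal inside}, and then integrating in $\eta$ and $\tau$ in order. The real part of the exponent in \eqref{Szego kernal inside} equals $\tau[\eta(x+r)-b(x)-b(r)-h-k]$, so absolute convergence reduces to comparing $\eta(x+r)-b(x)-b(r)-h-k$ with $\log N(\eta,\tau)$. The preparatory step, and the main technical obstacle, is to establish a lower bound of the form
$$N(\eta,\tau)\ge c\,\tau^{-\gamma}\,e^{2\tau b^*(\eta)}$$
for all $\tau>0$ and $\eta\in\mathbb{R}$, with $\gamma$ bounded and depending only on $b$. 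One obtains this by restricting \eqref{N(eta,tau)} to a neighborhood of a minimizer $\lambda_0\in\Lambda_\eta$ whose size is chosen according to the order of vanishing of $B_\eta$ at $\lambda_0$; since $b$ has degree $2n$, that order is at most $2n$. The difficulty is uniformity: at the (finitely many) $\eta\in\mathcal{C}$ the order and number of minimizers jump, and as $|\eta|\to\infty$ the minimizer moves off to infinity at rate $|\eta|^{1/(2n-1)}$. Both phenomena require case analysis, but since $b$ is a fixed polynomial these cases are finite in number and asymptotically controlled.

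Given the lower bound, the modulus of the integrand is dominated by
$$C\,\tau^{1+\gamma}\,e^{-\tau[h+k+b(x)+b(r)+2b^*(\eta)-\eta(x+r)]}.$$
The identity $\inf_{\eta}[2b^*(\eta)-\eta(x+r)]=-2b^{**}((x+r)/2)$, combined with the super-linear growth $b^*(\eta)\sim|\eta|^{2n/(2n-1)}$ at infinity, yields an integrable bound in $\eta$ whose $\eta$-integral is polynomial in $\tau$. The residual $\tau$-integral then reduces to $\int_0^\infty\tau^A\,e^{-\tau\Phi}\,d\tau$ with $\Phi:=h+k+b(x)+b(r)-2b^{**}((x+r)/2)$, which is finite exactly when $\Phi>0$, giving condition \eqref{region for abs conv}. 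The derivative formula \eqref{derivatives of S} then follows by dominated convergence, since the extra factors $|\eta|^{i_1+j_1}\tau^{i_1+j_1+i_2+j_2}$ are absorbed by the same exponential decay in $\eta$ and the strict positivity of the $\tau$-exponent.

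The final step is to check that the region \eqref{region for abs conv} is an open neighborhood of $(\overline{\Omega}\times\overline{\Omega})\setminus\Sigma$. Using $b\ge b^{**}$ (with equality exactly on $\Lambda$) and the convexity of $b^{**}$, which gives $2b^{**}((x+r)/2)\le b^{**}(x)+b^{**}(r)$ with equality iff $b^{**}$ is affine on $[x,r]$, together with the fact that $h,k\ge 0$ on $\overline{\Omega}$, one sees that $\Phi\ge 0$ on $\overline{\Omega}\times\overline{\Omega}$ with equality iff $h=k=0$, $x,r\in\Lambda$, and either $x=r$ or $b^{**}$ is affine on $[x,r]$. The latter condition is equivalent to $x,r\in\Lambda_c$ for some $c\in\mathcal{C}$, which is precisely the description of $\Sigma$; since the region is defined by a strict inequality, it is open.
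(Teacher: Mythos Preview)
Your overall architecture matches the paper's: bound $N^{-1}$ from above via a lower bound on $I=\int e^{-2\tau p_\eta(\xi)}\,d\xi$, reduce to an integral governed by $\delta+A(x,r,\eta)$, and then identify the region where the infimum of this quantity is positive. The last paragraph, characterising $\Phi>0$ via $b\ge b^{**}$ and the affine-segment criterion for $b^{**}$, is correct and in fact a bit cleaner than the paper's argument, which works directly with $A_x(\eta)+A_r(\eta)$ and the sets $\Lambda_{\eta_0}$.

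The genuine gap is the claimed uniform bound $N(\eta,\tau)\ge c\,\tau^{-\gamma}e^{2\tau b^*(\eta)}$ with $c,\gamma$ depending only on $b$. This is false. For large $|\eta|$ the polynomial $p_\eta$ has a single nondegenerate minimum at $0$ with $p_\eta''(0)=b''(\lambda(\eta))\sim c\,|\eta|^{(2n-2)/(2n-1)}$, so for large $\tau$ Laplace's method gives $I\approx(\tau\, b''(\lambda(\eta)))^{-1/2}\approx\tau^{-1/2}|\eta|^{-(n-1)/(2n-1)}$, which tends to $0$ as $|\eta|\to\infty$ for each fixed $\tau$. On the other hand, for small $\tau$ the tail $p_\eta(\xi)\sim\frac{1}{2n}\xi^{2n}$ forces $I\approx\tau^{-1/(2n)}$. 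No single exponent $\gamma$ can match both regimes uniformly in $\eta$: the large-$\tau$ behavior demands $\gamma\ge\tfrac12$ (and even then an $\eta$-dependent constant), while the small-$\tau$ behavior demands $\gamma\le\tfrac{1}{2n}$. Your remark that the large-$|\eta|$ regime is ``asymptotically controlled'' is therefore not enough; the control must appear explicitly in the estimate.

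The paper avoids this by using the $\eta$-dependent lower bound
\[
I\;\gtrsim\;\Bigl[\sum_{j=2}^{2n}\tau^{1/j}\,|b^{(j)}(\lambda(\eta))|^{1/j}\Bigr]^{-1},
\]
obtained from Lemma~\ref{lemma: for convex p}. This turns $\widetilde S^{s,m,\delta}$ into a finite sum $\sum_j I_j^{s,m,\delta}$; integrating each in $\tau$ first gives an explicit Gamma integral, and the remaining $\eta$-integrand is then shown to decay like a negative power of $|\eta|$ using Lemmas~\ref{lemma: asy behavior of b*}, \ref{lemma: asy b^j}, and \ref{lemma: delta+A}. Your order of integration ($\eta$ first, then $\tau$) would also work, but only after you replace your uniform bound by one carrying a polynomial-in-$|\eta|$ prefactor; that extra growth is then absorbed by the super-linear exponential $e^{-2\tau b^*(\eta)}$, and the rest of your outline goes through.
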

\begin{remark}
Compare this with Theorem 3.2 in \cite{HNW:09} and with Theorem
2.3 in \cite{GH:11}.
\end{remark}
\begin{theorem}\label{theorem: x=-r singularity}
If $[(x+iy,t+ib(x)),(r+iy,t+ib(r))] \in \Sigma$,
$\mathcal{S}[(x,y,t),(r,y,t)]$ is infinite. Also, if $\delta =
h+k> 0$,
$$\lim_{\delta \to 0^+} S[(x+iy,t+i(b(x)+h)),(r+iy,t+i(b(r)+k))]=\infty.$$
\end{theorem}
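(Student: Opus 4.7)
The plan is to exploit the fact that the hypotheses force the phase in \eqref{Szego kernel on R3 X R3} to become purely real, so the integrand is pointwise positive and divergence can be established by producing an explicit lower bound. Substituting $s = y$ and $u = t$ into \eqref{Szego kernel on R3 X R3} reduces the integrand to
$$\tau \exp\bigl\{\tau[\eta(x+r) - b(x) - b(r) - (h+k)]\bigr\}[N(\eta,\tau)]^{-1},$$
with $h = k = 0$ in the boundary case and $\delta = h + k > 0$ in the interior-limit case. Writing the real exponent as $\tau[\phi(\eta) - (h+k)]$, where
$$\phi(\eta) := \eta(x+r) - b(x) - b(r) - 2b^*(\eta) = [\eta x - b(x) - b^*(\eta)] + [\eta r - b(r) - b^*(\eta)],$$
the definition of $b^*$ immediately gives $\phi(\eta) \le 0$ for every $\eta$. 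Moreover, when $(z,w) \in \Sigma$ there exists $\eta_0$ at which $\phi(\eta_0) = 0$: in Case 1 ($x = r \in \Lambda_{\eta_0}$) the identity $\eta_0 x - b(x) = b^*(\eta_0)$ gives $\phi(\eta_0) = 0$; in Case 2 ($x, r \in \Lambda_c$ for $c \in \mathcal{C}$) both $cx - b(x)$ and $cr - b(r)$ equal $b^*(c)$, again forcing $\phi(c) = 0$. Thus $\phi$ is maximized at $\eta_0$ with maximum value $0$.

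Next I would establish the lower bound $[N(\eta,\tau)]^{-1} \ge \kappa\, e^{-2\tau b^*(\eta)}$ locally uniformly in $\eta$ near $\eta_0$ for $\tau$ large. This follows from the complementary bound $N(\eta,\tau) \le C\, e^{2\tau b^*(\eta)}$, obtained by splitting $\int e^{-2\tau B_\eta(\lambda)}\, d\lambda$ at a large radius $R$, using $B_\eta(\lambda) \ge -b^*(\eta)$ on $|\lambda| \le R$, and using the growth $b(\lambda) \sim \lambda^{2n}/(2n)$ to estimate the tail by a $\tau$-independent constant. With this in hand, the integrand of $S$ is at least $c\, \tau\, e^{\tau[\phi(\eta) - (h+k)]}$. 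Since $\phi$ is concave (linear minus the convex function $2b^*$) and $b$ is a polynomial, there exist $\varepsilon, C, p > 0$ with $\phi(\eta) \ge -C|\eta - \eta_0|^p$ on $|\eta - \eta_0| < \varepsilon$. The change of variables $u = (C\tau)^{1/p}(\eta - \eta_0)$ in the $\eta$-integral then produces a factor $\tau^{-1/p}$, so that
$$S \ge c' \int_{\tau_0}^\infty \tau^{1 - 1/p}\, e^{-\tau(h+k)}\, d\tau,$$
which diverges to $+\infty$ at $h+k = 0$ for any finite $p$, proving the first assertion.

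For the interior-limit assertion, the integrand is positive and monotonically increases pointwise as $\delta = h + k \searrow 0$, since only the factor $e^{-\tau\delta}$ depends on $\delta$. Theorem \ref{theorem: absolute convergence} ensures $S$ is finite for $\delta > 0$: one checks that $b(x) + b(r) - 2b^{**}\bigl(\tfrac{x+r}{2}\bigr) = 0$ in both cases of $\Sigma$, using that $b^{**}(x) = b(x)$ precisely when $x \in \Lambda$ and that $b^{**}$ is affine on the convex hull of $\Lambda_c$, so that \eqref{region for abs conv} reduces to $\delta > 0$. The monotone convergence theorem then gives $\lim_{\delta \to 0^+} S = S\big|_{\delta = 0} = +\infty$ by the first part.

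The main delicate point will be the uniform upper bound on $N(\eta,\tau)$ across the various possible shapes of the minimum set of $B_\eta$ (non-degenerate minima, higher-order minima at special $\eta$, or multiple minima for $\eta \in \mathcal{C}$) and the identification of the rate $p$. Fortunately only qualitative control (that $p$ is finite) is needed for divergence of the $\tau$-integral, so the argument is robust; sharper quantitative estimates belong to the companion size estimates elsewhere in the paper.
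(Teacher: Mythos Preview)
Your overall strategy is sound and parallels the paper's proof: reduce to a positive integrand, bound $N(\eta,\tau)$ from above (equivalently, bound $I=\int e^{-2\tau p_\eta}$ from above), control the vanishing rate of $A(x,r,\eta)=-\phi(\eta)$ at $\eta_0$, and deduce divergence. The monotone-convergence argument for the interior limit is the same as the paper's.

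There are two places where your execution differs from the paper and one place where it slips. First, the paper's upper bound on $N$ is the sharper estimate $I\le c(1+\tau^{1/2})/\tau^{1/2}$ (Proposition~\ref{prop: upper bound on I}), proved by factoring $p_\eta$ over $\mathbb{R}$ and controlling the real parts of the roots uniformly in $\eta$ (Lemma~\ref{lemma: local bddness of h_j}). Your cruder bound $I\le C$ for $\tau\ge 1$ is enough for divergence, and your splitting argument does work once one notes $b^*(\eta)\ge -b(0)=0$ so that the tail constant is absorbed by $e^{2\tau b^*(\eta)}$; you should make this step explicit. Second, for the vanishing of $A$ the paper proves directly (Lemma~\ref{lemma: vanishing of A}) that $A_x(\eta)=(\eta-\eta_0)F_x(\eta)$ with $F_x$ locally bounded, via an algebraic manipulation and the minimality of $B_{\eta_0}(x)$. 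Your route---$\phi$ is concave, hence $b^*$ and $\phi$ are locally Lipschitz, hence $|\phi(\eta)|\le L|\eta-\eta_0|$ near $\eta_0$---is shorter and yields exactly $p=1$; that is a genuine simplification. However, your stated justification ``since $b$ is a polynomial'' is a red herring: $b^*$ is \emph{not} a polynomial, and it is the local Lipschitz property of finite convex functions that does the work.

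The slip: your assertion that $\int_{\tau_0}^\infty \tau^{1-1/p}\,d\tau$ diverges ``for any finite $p$'' is false---it requires $p\ge 1/2$. This is harmless because your concavity argument actually delivers $p=1$, giving $\int_{\tau_0}^\infty \tau^{0}\,d\tau=\infty$, but you should state $p=1$ explicitly rather than leave $p$ unspecified. With that correction and the clarification about the tail bound on $N$, your proof goes through.
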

We will show that the set $\Sigma$ is equal to the diagonal
$\Delta$ of $\partial \Omega \times \partial \Omega$ precisely
when the polynomial $b$ is convex. For non-convex $b$, there are
both points {\it off the diagonal} that are contained in $\Sigma$
and points {\it on the diagonal} that are not in $\Sigma$. We
summarize this important observation in a corollary.
\begin{cor}
For tube domains \eqref{tube domain} in $\C{2}$ with $b$ an
even-degree polynomial with positive leading coefficient, the
Szeg\"o kernel extends smoothly to $(\overline{\Omega} \times
\overline{\Omega}) \setminus \Delta$ if and only if $b$ is convex.
\end{cor}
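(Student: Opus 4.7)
My plan is to establish the two directions of the equivalence separately, combining Theorems~\ref{theorem: absolute convergence} and~\ref{theorem: x=-r singularity} with the structure of $\Sigma$ as determined by the convex envelope $b^{**}$.

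For the direction ``$b$ not convex $\Rightarrow$ singularity off $\Delta$'', I would argue as follows. Since $b\neq b^{**}$, standard convex analysis yields a connected component $(\alpha,\beta)$ of the open set $\{b>b^{**}\}$ on which $b^{**}$ is affine with some slope $c$, with $b(\alpha)=b^{**}(\alpha)$, $b(\beta)=b^{**}(\beta)$, and $b'(\alpha)=b'(\beta)=c$. Then $\alpha$ and $\beta$ both globally minimize $B_c$, so $\{\alpha,\beta\}\subseteq \Lambda_c$ and $c\in \mathcal{C}$. For any fixed $y,t\in\mathbb{R}$, the boundary pair $(z,w)=\bigl((\alpha+iy,\,t+ib(\alpha)),\,(\beta+iy,\,t+ib(\beta))\bigr)$ lies in the second component of $\Sigma$, but $z\neq w$, so $(z,w)\notin\Delta$. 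Theorem~\ref{theorem: x=-r singularity} then gives $S(z,w)=\infty$, producing the required off-diagonal singularity.

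For the direction ``$b$ convex $\Rightarrow$ $S$ extends smoothly off $\Delta$'', I would use $b=b^{**}$ to rewrite the region of absolute convergence in Theorem~\ref{theorem: absolute convergence} as $\{h+k+b(x)+b(r)-2b((x+r)/2)>0\}$. For strictly convex $b$ this is exactly $\overline{\Omega}\times\overline{\Omega}$ minus the codimension-two stratum $\{h=k=0,\,x=r\}$, so Theorem~\ref{theorem: absolute convergence} already yields smoothness off this stratum. The remaining task is the ``extra'' off-diagonal boundary points $(z,w)$ with $x=r$ and $z\neq w$: here I would either invoke the general regularity theory for the Szeg\"o kernel on pseudoconvex finite-type domains (since $\Omega$ is pseudoconvex and finite-type whenever $b$ is a convex polynomial with positive leading coefficient; cf.\ \cite{NRSW:89}), or directly analyze the integral via the Laplace asymptotics $N(\eta,\tau)^{-1}\sim \tau^{1/2}e^{-2\tau b^{*}(\eta)}$ and stationary phase in $\eta$ near $\eta=b'(x)$, exploiting the nonvanishing oscillatory factor $e^{i\tau[\eta(y-s)+(t-u)]}$ to secure absolute $\tau$-integrability.

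I expect the main obstacle to be precisely this last step: on the boundary stratum $\{x=r,\,z\neq w\}$ the integral for $S$ is only conditionally convergent, yet $S$ is smooth there by cancellation. Bridging this gap either relies on outside pseudoconvex regularity theory or a careful oscillatory-integral computation that makes essential use of the translation invariance of $S$ in the $y$ and $t$ variables. By contrast, the non-convex direction is a clean consequence of the convex envelope's piecewise structure together with Theorem~\ref{theorem: x=-r singularity}.
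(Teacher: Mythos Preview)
Your proposal is correct and matches the paper's treatment. The paper gives no separate proof of the corollary; it is presented as a summary of the preceding observation together with Theorems~\ref{theorem: absolute convergence} and~\ref{theorem: x=-r singularity}. For the non-convex direction, your convex-envelope argument is equivalent to the paper's: the paper deduces $\mathcal{C}\neq\emptyset$ from Theorem~\ref{theorem: structure of Lambda} and its corollary that $b$ is convex on $\mathbb{R}\setminus\bigcup_{c\in\mathcal{C}}[\sigma(c),\lambda(c))$, so non-convexity forces some $c\in\mathcal{C}$ and hence an off-diagonal pair $x,r\in\Lambda_c$ to which Theorem~\ref{theorem: x=-r singularity} applies. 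Your endpoints $(\alpha,\beta)$ are exactly such a pair $(\sigma(c),\lambda(c))$.

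You are also right to flag the convex direction as the delicate one. When $b$ is convex, Theorem~\ref{theorem: absolute convergence} only yields absolute convergence off the set $\{h=k=0,\ x=r\}$, which properly contains $\Delta$; the paper's informal assertion that ``$\Sigma=\Delta$ precisely when $b$ is convex'' elides this. The paper does not fill this gap internally: smoothness at boundary points with $x=r$ but $z\neq w$ is covered by the cited pseudoconvex finite-type results (\cite{NRSW:89}, and Theorem~3.2 of \cite{HNW:09} invoked in the remark following Theorem~\ref{theorem: absolute convergence}). Your plan to invoke these is precisely the intended route; your alternative oscillatory-integral sketch would also work but is not what the paper does.
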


An analysis of the Szeg\"o kernel begins with estimates of the
integral $N$ defined in \eqref{N(eta,tau)}. Observe that for fixed
$\eta \in \mathbb{R}$ and $\tau >0$,
$\lim_{|\lambda|\to\infty}2\tau[\eta \lambda - b(\lambda)]=-2\tau
\lim_{|\lambda|\to \infty}B_{\eta}(\lambda)=-\infty$ . The
heuristic principle that guides the analysis of such integrals is
that the main contribution comes from a neighborhood of the
point(s) at which the exponent attains its global maximum. For our
integral $N$, let $\lambda(\eta)$ denote the largest real number
at which $\inf_{\lambda}B_\eta(\lambda)$ is attained. Then
\begin{eqnarray*}
N(\eta,\tau)&=&e^{-2\tau B_\eta(\lambda(\eta))}
\int_{-\infty}^\infty e^{-2\tau [-\eta \lambda +
b(\lambda)-B_\eta(\lambda(\eta))]}\,d\lambda\\
&=&e^{2\tau b^*(\eta)}\int_{-\infty}^\infty e^{-2\tau
p_\eta(\xi)}\,d\xi,
\end{eqnarray*}
where
\begin{equation}\label{def of poly p_eta}
p_\eta(\xi):=-\eta \xi + b(\xi + \lambda(\eta))-b(\lambda(\eta))
\end{equation}
is a non-negative polynomial vanishing to even order at the
origin. Furthermore, by our choice of $\lambda(\eta)$, if for some
$\eta$, $p_\eta(\xi)=0$ for non-zero $\xi$, necessarily $\xi < 0$.

In Sections \ref{section: global properties} and \ref{section:
local properties of B}, we focus on understanding the main
contribution to the integral $N$ by exploring $B_\eta$ and
$\lambda(\eta)$, while in Section \ref{section: estimates for I},
we focus on estimating the integral that remains once we have
taken out this main contribution. The theorems are established in
Section \ref{section: proof of thms}.

\section{\label{section: global properties} Global properties of $\lambda(\eta)$ and $B_\eta$}

This section contains a number of technical lemmas on the
long-term behavior of  $\lambda(\eta)$ and
$B_\eta(\lambda(\eta))=-b^*(\eta)$. Most of these results follow
rather easily from the fact that $B_\eta$ is a polynomial and
$\lambda(\eta)$ is one of its critical points.

\begin{lemma}\label{lemma: asy behavior of lambda}
$\lim_{\eta \to -\infty}\lambda(\eta)=-\infty$ and $\lim_{\eta \to
\infty}\lambda(\eta)=\infty$. Furthermore, $\lambda(\eta) \sim
\eta^{\frac{1}{2n-1}}$ as $|n|\to\infty$.
\end{lemma}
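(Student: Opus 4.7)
My plan is to analyze the critical-point equation $b'(\lambda(\eta))=\eta$ and to extract both the divergence and the asymptotic rate from the fact that $b'$ is eventually strictly monotone. First I would establish that $\lambda(\eta)$ satisfies $b'(\lambda(\eta))=\eta$: since $b$ is a polynomial of even degree $2n$ with positive leading coefficient, the polynomial $B_\eta(x)=-\eta x+b(x)$ has the same degree and leading coefficient, hence $B_\eta(x)\to\infty$ as $|x|\to\infty$, and the global minimum is attained at an interior critical point of $B_\eta$.

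Next I would show that for $|\eta|$ sufficiently large the equation $b'(x)=\eta$ has a unique real root, which must therefore coincide with $\lambda(\eta)$. The key observation is that $b''$ is an even-degree polynomial with positive leading coefficient, so $b''(x)\to\infty$ as $|x|\to\infty$; choosing $R>0$ large enough that $b''>0$ on $\{|x|>R\}$ and setting $M=\max_{|x|\le R}|b'(x)|$, we see that for $|\eta|>M$ the equation has no solution in $[-R,R]$, while $b'$ is strictly increasing on each of $(-\infty,-R)$ and $(R,\infty)$ with ranges $(-\infty,b'(-R))$ and $(b'(R),\infty)$ respectively. For such $\eta$ there is thus a unique real solution, lying in $(R,\infty)$ when $\eta>0$ and in $(-\infty,-R)$ when $\eta<0$. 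In particular $\lambda(\eta)\to\pm\infty$ with $\eta$, for otherwise $b'(\lambda(\eta))=\eta$ would remain bounded.

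Finally I would pin down the rate by writing $b'(x)=x^{2n-1}+r(x)$ with $\deg r\le 2n-2$, so that $r(x)/x^{2n-1}\to 0$ as $|x|\to\infty$. Substituting $x=\lambda(\eta)$ into $b'(x)=\eta$ and dividing by $\lambda(\eta)^{2n-1}$ yields
\[
1+\frac{r(\lambda(\eta))}{\lambda(\eta)^{2n-1}}=\frac{\eta}{\lambda(\eta)^{2n-1}},
\]
and the left side tends to $1$ by the previous step. Taking $(2n-1)$-th roots (legitimate because $2n-1$ is odd and $\lambda(\eta)$ inherits its sign from $\eta$) gives $\lambda(\eta)\sim\eta^{1/(2n-1)}$.

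The bookkeeping involves only elementary polynomial comparisons; the one place that genuinely requires attention is the uniqueness of the real critical point for large $|\eta|$, since without it the symbol $\lambda(\eta)$ could in principle jump between distinct branches as $\eta$ varies. I expect this to be the main (but mild) obstacle, and it is handled by the monotonicity argument above.
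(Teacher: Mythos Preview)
Your proposal is correct and follows essentially the same route as the paper's proof: both identify $\lambda(\eta)$ as the (eventually unique) real solution of $b'(x)=\eta$, use convexity of $b$ outside a compact set to secure that uniqueness for large $|\eta|$, deduce $|\lambda(\eta)|\to\infty$, and then read off the asymptotic from $\eta/\lambda(\eta)^{2n-1}\to 1$. The only cosmetic difference is that you package the uniqueness via $M=\max_{|x|\le R}|b'(x)|$ uniformly in sign, whereas the paper treats $\eta\to-\infty$ explicitly and invokes symmetry for the other direction.
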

\begin{proof}
We consider the case $\eta \to -\infty$.  The case $\eta \to
\infty$ is established similarly.

Consider the equation $b'(\omega)=\eta$.  Since $b$ has even
degree and positive leading coefficient, there exists an interval
$(-\infty, \beta)$ on which $b$ is convex.  Thus on this interval,
$b'$ is an increasing function with a well-defined inverse
function $\eta \mapsto \omega(\eta)$. We claim that for any $L>0$
with $-L \le \beta$ there exists $m$ such that for $\eta < m$,
$\omega(\eta)=\lambda(\eta)$. Indeed, since $b'$ is an odd-degree
polynomial with positive leading coefficient, the number
$m=\inf\{b'(\omega):\omega \ge -L \}$ is finite. If $\eta < m$,
the only solution to $b'(\omega)=\eta$ on $\mathbb{R}$ must lie in
$(-\infty, -L)\subseteq (-\infty, \beta)$.  Since $\lambda(\eta)$
is a solution, it lies in this interval. Thus for $\eta<m$,
$\lambda(\eta)=\omega(\eta)$.

Note that $b'(\omega)=\omega^{2n-1}+\sum_{j=2}^{2n-1}j a_j
\omega^{j-1}$. Take $L>0$ so that $|\omega|\ge L$ implies
$$\sum_{j=2}^{2n-1}j|a_j||\omega|^{-2n+j}\le\frac{1}{2}.$$  Then
for $\omega \le -L$,
$$\frac{3}{2}\omega^{2n-1} \le \omega^{2n-1}\left(1+\sum_{j=2}^{2n-1}j|a_j||\omega|^{-2n+j}\right)=\omega^{2n-1}-\sum_{j=2}^{2n-1}j|a_j||\omega|^{j-1} \le b'(\omega). $$
Since for $\eta < m$ the solution to $b'(\omega)=\eta$ is
$\lambda(\eta)$, this shows that $\lambda(\eta) \to -\infty$ as
$\eta \to -\infty$. Furthermore, if $b'(\omega)=\eta$,
\begin{equation}\label{asy fomula for lambda}
\omega^{2n-1}=\eta-\sum_{j=2}^{2n-1} j a_j\omega^{j-1}
\Leftrightarrow
1=\frac{\eta}{\omega^{2n-1}}-\sum_{j=2}^{2n-1}\frac{j
a_j}{\omega^{2n-j}}=\frac{\eta}{\omega^{2n-1}}+o(1)
\end{equation}
as $\eta \to -\infty$.  Thus $\lambda(\eta)^{2n-1} \sim \eta$ as
$\eta \to -\infty$, i.e., $\lambda(\eta)^{2n-1} =\eta[1+o(1)]$ as
$\eta \to -\infty$.  It follows that $\lambda(\eta) \sim
\eta^{\frac{1}{2n-1}}$ as $\eta \to -\infty$.
\end{proof}

This allows us immediately to obtain size estimates for
$B_\eta(\lambda(\eta))=-b^*(\eta)$ for large $\eta$.
\begin{lemma}\label{lemma: asy behavior of b*}
\begin{equation}\label{asy for b*}
b^*(\eta)\sim
\left(\frac{2n-1}{2n}\right)\eta^{\frac{2n}{2n-1}}\quad\text{as
$|\eta|\rightarrow\infty.$}
\end{equation}
\end{lemma}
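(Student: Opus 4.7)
The plan is to exploit the identity $b^*(\eta) = \eta\lambda(\eta) - b(\lambda(\eta))$ (which is immediate from $b^*(\eta) = -B_\eta(\lambda(\eta))$) together with the asymptotic $\lambda(\eta) \sim \eta^{1/(2n-1)}$ established in Lemma \ref{lemma: asy behavior of lambda}. Everything then reduces to keeping track of the leading-order term and showing the rest is a strictly lower-order error.

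First I would substitute the explicit form $b(\lambda) = \frac{1}{2n}\lambda^{2n} + \sum_{j=2}^{2n-1}a_j\lambda^j$ and use that $\lambda(\eta)$ is a critical point of $B_\eta$, i.e.\ $b'(\lambda(\eta)) = \eta$. As in the proof of the previous lemma (see \eqref{asy fomula for lambda}), this gives
\[
\eta = \lambda(\eta)^{2n-1} + \sum_{j=2}^{2n-1} j a_j \lambda(\eta)^{j-1},
\]
so multiplying by $\lambda(\eta)$,
\[
\eta\lambda(\eta) = \lambda(\eta)^{2n} + \sum_{j=2}^{2n-1} j a_j \lambda(\eta)^{j} = \lambda(\eta)^{2n} + O(\lambda(\eta)^{2n-1}).
\]
Subtracting $b(\lambda(\eta))$ then yields
\[
b^*(\eta) = \lambda(\eta)^{2n} - \frac{1}{2n}\lambda(\eta)^{2n} + O(\lambda(\eta)^{2n-1}) = \frac{2n-1}{2n}\lambda(\eta)^{2n} + O(\lambda(\eta)^{2n-1}).
\]

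Next I would convert the $\lambda$-asymptotics into $\eta$-asymptotics. From $\lambda(\eta)^{2n-1} = \eta[1+o(1)]$ we obtain $\lambda(\eta)^{2n} = \lambda(\eta)\cdot \lambda(\eta)^{2n-1} \sim \eta^{1/(2n-1)}\cdot\eta = \eta^{2n/(2n-1)}$, with the convention that $\eta^{2n/(2n-1)} = |\eta|^{2n/(2n-1)}$ is always positive (legitimate because $2n/(2n-1)$ has even numerator). Likewise $\lambda(\eta)^{2n-1} \sim \eta$, which is of order $|\eta|^1$, and since $\frac{2n}{2n-1} > 1$ the error term $O(\lambda(\eta)^{2n-1})$ is $o\!\bigl(\eta^{2n/(2n-1)}\bigr)$. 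Dividing through by $\eta^{2n/(2n-1)}$ gives the claimed asymptotic
\[
b^*(\eta) \sim \frac{2n-1}{2n}\,\eta^{2n/(2n-1)}\qquad(|\eta|\to\infty).
\]

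There is no real obstacle here; the lemma is essentially bookkeeping on top of Lemma \ref{lemma: asy behavior of lambda}. The only thing that warrants care is sign conventions in the case $\eta\to-\infty$: $\lambda(\eta)$ is negative, $\lambda(\eta)^{2n-1}$ is negative and of the same sign as $\eta$, while $\lambda(\eta)^{2n}$ and $b^*(\eta)$ are positive, and $\eta^{2n/(2n-1)}$ is to be read as the positive real $|\eta|^{2n/(2n-1)}$. Once that bookkeeping is made explicit, the $\eta\to+\infty$ and $\eta\to-\infty$ cases are handled in exactly the same way.
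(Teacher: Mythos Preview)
Your proposal is correct and follows essentially the same route as the paper: both arguments compute $b^*(\eta)=\eta\lambda(\eta)-b(\lambda(\eta))$ and reduce to the asymptotic $\lambda(\eta)\sim\eta^{1/(2n-1)}$ from Lemma~\ref{lemma: asy behavior of lambda}. The only cosmetic difference is that you first use the critical-point relation $\eta=b'(\lambda(\eta))$ to rewrite $\eta\lambda(\eta)=\lambda(\eta)^{2n}+O(\lambda(\eta)^{2n-1})$ before inserting the asymptotic, whereas the paper plugs $\lambda(\eta)=\eta^{1/(2n-1)}(1+o(1))$ directly into each term; the bookkeeping and the conclusion are identical.
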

\begin{proof}
\begin{eqnarray*}
B_\eta(\lambda(\eta))&=&b(\lambda(\eta))-\eta \lambda(\eta)\\
&=&\frac{1}{2n}\lambda(\eta)^{2n}
+\sum_{j=2}^{2n-1}a_j\lambda(\eta)^j-\eta \lambda(\eta).
\end{eqnarray*}
By Lemma \ref{lemma: asy behavior of lambda},
\begin{eqnarray*}\label{asymptotic2}
B_\eta(\lambda(\eta))&=&\frac{1}{2n}\eta^{\frac{2n}{2n-1}}(1+o(1))^{2n}+\sum_{j=2}^{2n-1}a_j\left(\eta^{\frac{1}{2n-1}}(1+o(1))\right)^{j}- \eta^{\frac{2n}{2n-1}}(1+o(1))  \nonumber\\
&=&\frac{1}{2n}\eta^{\frac{2n}{2n-1}}(1+o(1))+\sum_{j=2}^{2n-1}a_j\eta^{\frac{j}{2n-1}}(1+o(1))
- \eta^{\frac{2n}{2n-1}}(1+o(1))\nonumber\\
&=&\left(\frac{1-2n}{2n}\right)\eta^{\frac{2n}{2n-1}}(1+o(1))+\sum_{j=2}^{2n-1}a_j\eta^{\frac{j}{2n-1}}(1+o(1))\\
&=&\left(\frac{1-2n}{2n}\right)\eta^{\frac{2n}{2n-1}}\left[(1+o(1))+\left(\frac{2n}{1-2n}\right)\left(\sum_{j=2}^{2n-1}a_j\eta^{\frac{j-2n}{2n-1}}(1+o(1))\right)\right]\\
&=&\left(\frac{1-2n}{2n}\right)\eta^{\frac{2n}{2n-1}}(1+o(1))
\end{eqnarray*}
as $|\eta|\rightarrow\infty$, i.e.,
\begin{equation}\label{asymptotic4}
B_\eta(\lambda(\eta))\sim
\left(\frac{1-2n}{2n}\right)\eta^{\frac{2n}{2n-1}}
\end{equation}
as $|\eta|\rightarrow\infty$.  By our definition of $b^*$, the
result is established.
\end{proof}

We will also need asymptotic estimates for
$b^{(j)}(\lambda(\eta))$:
\begin{lemma}\label{lemma: asy b^j} For $j=2,\ldots,2n$,
\begin{equation}\label{asy for b^j}
b^{(j)}(\lambda(\eta))  \sim \frac{(2n-1)!}{(2n-j)!}
\eta^{\frac{2n-j}{2n-1}}\quad\text{as $|\eta|\rightarrow\infty$.}
\end{equation}
\end{lemma}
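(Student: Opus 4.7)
The plan is to prove this by direct computation of $b^{(j)}$ followed by substitution of the asymptotic formula for $\lambda(\eta)$ from Lemma \ref{lemma: asy behavior of lambda}. Since $b$ is an explicit polynomial, the $j$-th derivative can be written down term by term, and the dominant term will come from the leading monomial $\frac{1}{2n}x^{2n}$.

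First, differentiating $j$ times gives
\[
b^{(j)}(x) = \frac{1}{2n} \cdot \frac{(2n)!}{(2n-j)!} x^{2n-j} + \sum_{k=\max(j,2)}^{2n-1} a_k \frac{k!}{(k-j)!} x^{k-j} = \frac{(2n-1)!}{(2n-j)!} x^{2n-j} + R_j(x),
\]
where $R_j$ is a polynomial of degree at most $2n-1-j$ (and $R_{2n}\equiv 0$, $R_{2n-1}$ is a constant, etc.). I would then evaluate at $x=\lambda(\eta)$ and apply Lemma \ref{lemma: asy behavior of lambda}, which gives $\lambda(\eta)=\eta^{1/(2n-1)}(1+o(1))$ as $|\eta|\to\infty$. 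Hence
\[
\lambda(\eta)^{2n-j} = \eta^{\frac{2n-j}{2n-1}}(1+o(1))^{2n-j} = \eta^{\frac{2n-j}{2n-1}}(1+o(1)),
\]
and similarly each monomial $\lambda(\eta)^{k-j}$ appearing in $R_j(\lambda(\eta))$ satisfies $\lambda(\eta)^{k-j} = O\!\left(\eta^{\frac{k-j}{2n-1}}\right)$ with $k-j \le 2n-1-j < 2n-j$.

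Therefore
\[
b^{(j)}(\lambda(\eta)) = \frac{(2n-1)!}{(2n-j)!} \eta^{\frac{2n-j}{2n-1}}(1+o(1)) + O\!\left(\eta^{\frac{2n-1-j}{2n-1}}\right),
\]
and since $\frac{2n-1-j}{2n-1} < \frac{2n-j}{2n-1}$, the error term is subordinate to the main term and can be absorbed into the $(1+o(1))$ factor. This gives the desired asymptotic. There is no serious obstacle here: the argument is essentially the same bookkeeping as in Lemma \ref{lemma: asy behavior of b*}, and the only point worth checking carefully is the lower bound on the summation index in $R_j$ (so one does not include terms where $k<j$, which have been differentiated to zero) and the edge cases $j=2n-1$ and $j=2n$, which directly match the stated formula.
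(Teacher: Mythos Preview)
Your proof is correct and follows exactly the route the paper has in mind: the paper omits the proof, stating only that it is similar to that of Lemma~\ref{lemma: asy behavior of b*}, and your argument is precisely that---differentiate $b$ termwise, substitute $\lambda(\eta)=\eta^{1/(2n-1)}(1+o(1))$ from Lemma~\ref{lemma: asy behavior of lambda}, and check that the lower-degree terms are absorbed into the $o(1)$.
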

\begin{proof}
The proof is similar to that for Lemma \ref{lemma: asy behavior of
b*} and is omitted.
\end{proof}

We close this section with a proposition stating several
properties of $b^*$.
\begin{prop}\label{prop: properties of b^*}
For $b$ as in \eqref{our b}, $b^*(\eta)=\sup_x[\eta x -b(x)]$ is
finite and convex on $\mathbb{R}$.  It is therefore continuous.
\end{prop}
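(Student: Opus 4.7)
The plan is to establish each of the three claims (finiteness, convexity, continuity) in turn, exploiting the defining formula $b^*(\eta)=\sup_x[\eta x - b(x)]$ as a supremum of linear functions.

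First, for finiteness, I would fix $\eta \in \mathbb{R}$ and observe that, since $b$ is a polynomial of degree $2n$ with positive leading coefficient $1/(2n)$, the function $x \mapsto \eta x - b(x)$ is continuous on $\mathbb{R}$ and satisfies $\lim_{|x|\to\infty}[\eta x - b(x)] = -\infty$. Hence this function attains its supremum at some real point (in fact at $\lambda(\eta)$, as already discussed in the paper), so $b^*(\eta) = \eta\lambda(\eta) - b(\lambda(\eta))$ is a finite real number. Alternatively, the asymptotic estimate in Lemma \ref{lemma: asy behavior of b*} already gives finiteness for large $|\eta|$, and continuity of $\eta x - b(x)$ in $\eta$ handles the remaining compact range.

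Second, for convexity, I would use the standard fact that the pointwise supremum of a family of convex functions is convex. For each fixed $x \in \mathbb{R}$, the map $\eta \mapsto \eta x - b(x)$ is affine in $\eta$, hence convex. Therefore
\begin{equation*}
b^*(\eta) = \sup_{x \in \mathbb{R}}[\eta x - b(x)]
\end{equation*}
is convex as a supremum of affine functions. Concretely, given $\eta_1, \eta_2 \in \mathbb{R}$ and $t \in [0,1]$, for every $x$ we have $(t\eta_1 + (1-t)\eta_2)x - b(x) = t[\eta_1 x - b(x)] + (1-t)[\eta_2 x - b(x)] \le t\, b^*(\eta_1) + (1-t)\, b^*(\eta_2)$, and taking the supremum over $x$ gives the convexity inequality for $b^*$.

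Finally, continuity follows from the elementary theorem that a convex function which is finite on an open interval is continuous on that interval; since $b^*$ is finite everywhere on $\mathbb{R}$, it is continuous on all of $\mathbb{R}$. No step here presents a real obstacle; the only mild subtlety is ensuring that the supremum defining $b^*(\eta)$ is genuinely finite for every $\eta$, which is immediate from the growth of $b$ at infinity.
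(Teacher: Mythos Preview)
Your proposal is correct and follows essentially the same approach as the paper's own proof: finiteness from the fact that $x\mapsto \eta x-b(x)$ is an even-degree polynomial with negative leading coefficient, convexity from $b^*$ being a supremum of affine (hence convex) functions of $\eta$, and continuity from the standard fact that finite convex functions on $\mathbb{R}$ are continuous. The paper merely sketches these same three steps, so your more detailed write-up is entirely compatible.
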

\begin{proof}
We merely sketch the proof since these are known properties of the
Legendre transform.  The finiteness of $b^*$ comes from the fact
that $x \mapsto \eta x -b(x)$ is a non-constant polynomial with
even degree and negative leading coefficient.  The convexity comes
from the fact that $b^*$ is the supremum of a family $\{\,\eta
\mapsto \eta x -b(x):x\in \mathbb{R}\,\}$ of convex functions.
Furthermore, $b^*$ is continuous since every (finite) convex
function is continuous.
\end{proof}

\section{\label{section: local properties of B} Local properties of $\lambda(\eta)$ and $B_\eta$}

The main result of this section describes those points in
$\mathbb{R}$ that can be (global) minimizers of one of the members
of the family of polynomials $\{\,B_{\eta}(\lambda):=-\lambda \eta
+b(\lambda):\eta\in \mathbb{R}\,\}$.

\begin{definition}
For each $\eta \in  \mathbb{R}$, define $\Lambda_\eta$ to be the
set of all points at which the polynomial $B_{\eta}$ attains its
global minimum.  Let $\sigma(\eta)$ be the smallest element of
$\Lambda_{\eta}$ and let $\lambda(\eta)$ be the largest. Let
$\mathcal{C}=\{\,\eta: |\Lambda_\eta|>1\,\}$. Finally, let
$\Lambda=\bigcup_{\eta} \Lambda_\eta$ and
$\lambda[\mathbb{R}]=\{\lambda(\eta):\eta \in \mathbb{R}\}$.
\end{definition}

\begin{theorem} \label{theorem: structure of Lambda} $ \lambda[\mathbb{R}] = \mathbb{R} \setminus \bigcup_{c \in \mathcal{C}}[\sigma(c),
\lambda(c))$.
\end{theorem}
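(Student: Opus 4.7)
The plan is to exploit a simple monotonicity property: the set-valued map $\eta \mapsto \Lambda_\eta$ moves monotonically to the right as $\eta$ increases. Concretely, I would first prove that if $\eta_1 < \eta_2$, $\lambda_1 \in \Lambda_{\eta_1}$, and $\lambda_2 \in \Lambda_{\eta_2}$, then $\lambda_1 \le \lambda_2$. This is immediate from combining $B_{\eta_1}(\lambda_1) \le B_{\eta_1}(\lambda_2)$ and $B_{\eta_2}(\lambda_2) \le B_{\eta_2}(\lambda_1)$, which rearrange to $(\eta_2 - \eta_1)(\lambda_1 - \lambda_2) \le 0$. In particular $\lambda$ is nondecreasing, and for $\eta < c$ every element of $\Lambda_\eta$ lies below $\sigma(c)$, while for $\eta > c$ every element of $\Lambda_\eta$ lies above $\lambda(c)$.

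Second, I would identify the one-sided limits of $\lambda$ at each $\eta_0 \in \mathbb{R}$. Monotonicity guarantees both exist. For a sequence $\eta_k \to \eta_0$, the monotonicity bounds pin $\lambda(\eta_k)$ between known quantities, and passing to the limit in $B_{\eta_k}(\lambda(\eta_k)) \le B_{\eta_k}(x)$ for arbitrary $x$ shows that any subsequential limit lies in $\Lambda_{\eta_0}$. Combined with the monotone sandwich, this gives $\lambda(\eta_0^-) = \sigma(\eta_0)$ and $\lambda(\eta_0^+) = \lambda(\eta_0)$. Thus $\lambda$ is right-continuous on $\mathbb{R}$, continuous off $\mathcal{C}$, and at each $c \in \mathcal{C}$ it jumps from $\sigma(c)$ (not attained from the left) up to $\lambda(c)$ (attained at $\eta = c$).

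Third, to prove $\lambda[\mathbb{R}] \subseteq \mathbb{R} \setminus \bigcup_{c \in \mathcal{C}}[\sigma(c), \lambda(c))$, suppose $x = \lambda(\eta)$ with $\sigma(c) \le x < \lambda(c)$ for some $c \in \mathcal{C}$. The cases $\eta = c$ (then $\lambda(c) \ne x$), $\eta > c$ (then $\lambda(\eta) \ge \lambda(c) > x$), and $\eta < c$ with $x > \sigma(c)$ (then $\lambda(\eta) \le \sigma(c) < x$) all contradict the monotonicity of step one. The remaining edge case $\eta < c$, $x = \sigma(c)$ — the most delicate point — is eliminated by observing that $B_c'(\sigma(c)) = 0$ forces $B_\eta'(\sigma(c)) = c - \eta \ne 0$, so $\sigma(c)$ is not even a critical point of $B_\eta$, let alone a minimizer.

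Finally, for the reverse inclusion, fix $x_0 \notin \bigcup_{c \in \mathcal{C}}[\sigma(c), \lambda(c))$. By Lemma \ref{lemma: asy behavior of lambda} the set $\{\eta : \lambda(\eta) < x_0\}$ is nonempty and bounded above; let $\eta_0$ be its supremum. Monotonicity gives $\lambda(\eta) < x_0$ for $\eta < \eta_0$ and $\lambda(\eta) \ge x_0$ for $\eta > \eta_0$, and right-continuity yields $\lambda(\eta_0) \ge x_0$. If the inequality were strict, then $\eta_0$ would be a jump of $\lambda$, forcing $\eta_0 \in \mathcal{C}$ with $\sigma(\eta_0) = \lambda(\eta_0^-) \le x_0 < \lambda(\eta_0)$, contradicting the choice of $x_0$. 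Hence $\lambda(\eta_0) = x_0 \in \lambda[\mathbb{R}]$. The main obstacle I anticipate is the edge case $x = \sigma(c)$ in the third step: the purely order-theoretic reasoning does not quite rule it out, and one must leave that framework and invoke the first-order condition on $B_\eta$.
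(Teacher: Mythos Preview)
Your argument is correct and takes a genuinely different route from the paper's. The paper builds up to the theorem through a chain of auxiliary results: Lemma~\ref{lemma: disjoint safe zones} and Corollary~\ref{cor: disjoint safe zones} for the inclusion $\subseteq$, then the finiteness bound $|\mathcal{C}|\le n-1$ (Lemma~\ref{lemma: C is finite}) and the density statement of Lemma~\ref{lemma: non-empty}; for $\supseteq$ it uses finiteness to write the complement as finitely many intervals and, for $\omega$ in one of them, sets $\nu=\inf\{b'(\lambda):\lambda\in V\}$ for an appropriate $V$ and argues by cases that $\lambda(\nu)=\omega$. You instead extract the full monotone-function structure in one stroke: the set-valued map $\eta\mapsto\Lambda_\eta$ is order-preserving, so $\lambda$ is nondecreasing with one-sided limits $\lambda(\eta_0^-)=\sigma(\eta_0)$ and $\lambda(\eta_0^+)=\lambda(\eta_0)$; right-continuity then lets a supremum argument hit any $x_0$ outside the jump intervals. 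This is cleaner and, notably, does not invoke the finiteness of $\mathcal{C}$ at all, so it would go through even without Lemma~\ref{lemma: C is finite}. The paper's route, on the other hand, assembles that finiteness bound and the density Lemma~\ref{lemma: non-empty} along the way, and the former is needed independently for the convexity corollary that follows the theorem; if you want that corollary you would have to supply $|\mathcal{C}|<\infty$ separately. Your handling of the edge case $x=\sigma(c)$ via the first-order condition $B_\eta'(\sigma(c))=c-\eta\neq 0$ is exactly the right move and parallels the paper's use of disjointness of the $\Lambda_\eta$ (Corollary~\ref{cor: disjoint safe zones}) at the same point.
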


In the case of a {\it convex} polynomial $b$, $b'$ is one-to-one
and hence $B_\eta$ has precisely one critical point for each
$\eta$. Thus in the convex case, $\mathcal{C}=\emptyset$ and $b'$
and $\lambda$ are inverses. These statements are not true in the
non-convex case, though there are partial analogues.

Since all elements of $\Lambda_\eta$ are solutions to
$\eta=b'(\lambda)$, the following is immediate.
\begin{cor}\label{cor: lambda inj}
$\eta \mapsto \lambda(\eta)$ is injective.
\end{cor}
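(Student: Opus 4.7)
The plan is to argue directly from the remark preceding the corollary: every member of $\Lambda_\eta$ is a critical point of the polynomial $B_\eta(\lambda)=-\eta\lambda+b(\lambda)$, so in particular $\lambda(\eta)$, being the largest element of $\Lambda_\eta$, satisfies $B_\eta'(\lambda(\eta))=0$, i.e., $b'(\lambda(\eta))=\eta$.

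The key step is then a one-line contrapositive argument. Suppose $\eta_1,\eta_2\in\mathbb{R}$ and $\lambda(\eta_1)=\lambda(\eta_2)$. Applying $b'$ to this common value and using the identity $b'(\lambda(\eta_i))=\eta_i$ from the previous step, I obtain $\eta_1=b'(\lambda(\eta_1))=b'(\lambda(\eta_2))=\eta_2$. This forces $\eta_1=\eta_2$, which is exactly injectivity of $\eta\mapsto\lambda(\eta)$.

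There is no real obstacle here: the only fact required beyond the definition of $\lambda(\eta)$ is that a global minimizer of a differentiable function on $\mathbb{R}$ is a critical point, which is Fermat's theorem from single-variable calculus. The polynomial $B_\eta$ is everywhere differentiable and its global minimum is attained (as guaranteed by the definition of $\Lambda_\eta$ together with the fact that $B_\eta(\lambda)\to+\infty$ as $|\lambda|\to\infty$, since $b$ has even degree with positive leading coefficient). So the proof is essentially a two-sentence deduction, and I expect the published proof to be similarly terse.
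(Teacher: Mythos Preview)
Your argument is correct and matches the paper's exactly: the paper simply notes that every element of $\Lambda_\eta$ satisfies $\eta=b'(\lambda)$ and declares the corollary immediate, which is precisely the identity $b'(\lambda(\eta))=\eta$ you use to conclude $\eta_1=\eta_2$ from $\lambda(\eta_1)=\lambda(\eta_2)$.
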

We easily verify several other properties of $\lambda(\cdot)$ and
$b'$.
\begin{lemma}\label{lemma: lambda incr}
If $\lambda_1,\lambda_2 \in \Lambda$ with $\lambda_1 < \lambda_2$,
then $b'(\lambda_1)<b'(\lambda_2)$.
\end{lemma}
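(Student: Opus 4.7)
The plan is to reduce the claim to a simple exchange argument between the two members of the family $\{B_\eta\}_\eta$ that witness $\lambda_1$ and $\lambda_2$. First, for each $i$ I would select $\eta_i \in \mathbb{R}$ with $\lambda_i \in \Lambda_{\eta_i}$. Because $B_{\eta_i}$ is a polynomial and $\lambda_i$ is an interior global minimum, $\lambda_i$ is automatically a critical point, so $b'(\lambda_i) = \eta_i$. The target inequality thereby reduces to the cleaner statement $\eta_1 < \eta_2$.

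Next I would execute the standard cyclic-monotonicity trick. The global-minimality of $\lambda_i$ for $B_{\eta_i}$ gives the pair of inequalities
\[
B_{\eta_1}(\lambda_1) \le B_{\eta_1}(\lambda_2) \quad \text{and} \quad B_{\eta_2}(\lambda_2) \le B_{\eta_2}(\lambda_1).
\]
Adding them, the $b(\lambda_1)+b(\lambda_2)$ contributions cancel, leaving the compact relation $(\eta_2-\eta_1)(\lambda_2-\lambda_1) \ge 0$. Dividing by the positive quantity $\lambda_2-\lambda_1$ yields the weak monotonicity $\eta_1 \le \eta_2$ with no further input, and in particular uses only the polynomial nature of $b$ (through the first-order condition) and not the specific normalization in \eqref{our b}.

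The main obstacle is the upgrade from $\le$ to the strict inequality. Equality $\eta_1=\eta_2=:\eta$ would force $\lambda_1,\lambda_2 \in \Lambda_\eta$, placing $\eta$ in $\mathcal{C}$. To rule this out I would invoke Corollary \ref{cor: lambda inj}, which asserts injectivity of $\lambda(\cdot)$; this is enough whenever the $\lambda_i$ are chosen as the representatives $\lambda(\eta_i)$, since distinct values of $\lambda$ then forbid $\eta_1=\eta_2$. One subtlety worth flagging in the writeup: for completely arbitrary $\lambda_1,\lambda_2 \in \Lambda$, the strict conclusion can fail at an $\eta \in \mathcal{C}$ (e.g., both $\pm 1$ lie in $\Lambda_0$ for $b(x)=\tfrac14 x^4 - \tfrac12 x^2$, and $b'(\pm 1)=0$). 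Depending on how the lemma is used in Section \ref{section: local properties of B}, one either interprets $\lambda_i$ as $\lambda(\eta_i)$ and strictness follows as above, or one weakens the stated conclusion to $\le$; either way the cyclic-monotonicity step above is the substantive content.
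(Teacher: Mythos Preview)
Your cyclic-monotonicity argument is the same algebraic identity the paper uses, just packaged differently: the paper supposes $\eta_2<\eta_1$ and computes
\[
B_{\eta_1}(\lambda_2)-B_{\eta_1}(\lambda_1)=(\lambda_1-\lambda_2)(\eta_1-\eta_2)+\bigl[B_{\eta_2}(\lambda_2)-B_{\eta_2}(\lambda_1)\bigr]<0,
\]
which is exactly your two added inequalities rearranged. So the substantive step is identical.

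Your diagnosis of the strict-inequality issue is correct, and your counterexample $b(x)=\tfrac14 x^4-\tfrac12 x^2$ is valid: as literally stated, with $\lambda_1,\lambda_2$ ranging over all of $\Lambda$, the lemma is false. The paper's proof sidesteps this by opening with ``there exist $\eta_1\ne\eta_2$ such that $\lambda_i=\lambda(\eta_i)$'', i.e., it tacitly assumes each $\lambda_i$ lies in the range $\lambda[\mathbb{R}]$ rather than in the a priori larger set $\Lambda$. Under that reading, Corollary~\ref{cor: lambda inj} gives $\eta_1\ne\eta_2$ immediately, and the strict inequality follows from the exchange argument. All downstream uses of the lemma (Lemma~\ref{lemma: lambda and b' are inverses}, Corollary~\ref{cor: lambda incr}, and the proof of Theorem~\ref{theorem: structure of Lambda}) apply it only to elements of $\lambda[\mathbb{R}]$, so this is the intended interpretation. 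Your proposed resolution---either restrict the hypothesis to $\lambda_i\in\lambda[\mathbb{R}]$ or weaken the conclusion to $\le$---is exactly right; the paper in effect chooses the first option without saying so.
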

\begin{proof}
Since $\lambda_i \in \Lambda$, there exist $\eta_1 \ne \eta_2$
such that $\lambda_1=\lambda(\eta_1)$ and
$\lambda_2=\lambda(\eta_2)$. Since $\eta_i =
b'(\lambda(\eta_i))=b'(\lambda_i)$, we must show that
$\eta_1<\eta_2$.

Suppose, on the contrary, that $\eta_2 < \eta_1$.  Since
$\lambda(\eta_i)$ is a point at which $B_{\eta_i}(\lambda)=-\eta_i
\lambda + b(\lambda)$ attains its global minimum,
$B_{\eta_2}(\lambda_2)<B_{\eta_2}(\lambda_1)$. If $\eta_2 <
\eta_1$,
\begin{eqnarray*}
B_{\eta_1}(\lambda_2)-B_{\eta_1}(\lambda_1)&=&-\eta_1 \lambda_2 +
b(\lambda_2)-(-\eta_1 \lambda_1 +b(\lambda_1))\\
&=&(\lambda_1-\lambda_2)[\eta_2+(\eta_1-\eta_2)] +b(\lambda_2) -
b(\lambda_1)\\
&=&(\lambda_1-\lambda_2)(\eta_1-\eta_2)+B_{\eta_2}(\lambda_2)-B_{\eta_2}(\lambda_1)<0.\\
\end{eqnarray*}
This contradicts the fact that $B_{\eta_1}$ takes its global
minimum at $\lambda_1$ and proves the result.
\end{proof}

\begin{lemma}\label{lemma: lambda and b' are inverses}
$\lambda:\mathbb{R}\to\lambda[\mathbb{R}]$ and
$b':\lambda[\mathbb{R}] \to \mathbb{R}$ are inverses.
\end{lemma}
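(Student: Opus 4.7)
The plan is to prove the two compositions equal the identity, both of which reduce essentially to the first-order condition for a minimum plus the definition of $\lambda[\mathbb{R}]$.

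First I would show that $b' \circ \lambda = \mathrm{id}_{\mathbb{R}}$. Fix $\eta \in \mathbb{R}$. Since $\lambda(\eta) \in \Lambda_\eta$ is a global minimum of the polynomial $B_\eta(x) = -\eta x + b(x)$, it is a critical point, so $B_\eta'(\lambda(\eta)) = -\eta + b'(\lambda(\eta)) = 0$. Hence $b'(\lambda(\eta)) = \eta$. This simultaneously verifies that $b'$ does map $\lambda[\mathbb{R}]$ back into $\mathbb{R}$ (trivially true) and that every $\eta \in \mathbb{R}$ lies in the image of $b'$ restricted to $\lambda[\mathbb{R}]$, so $b':\lambda[\mathbb{R}]\to\mathbb{R}$ is surjective.

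Next I would show $\lambda \circ b' = \mathrm{id}_{\lambda[\mathbb{R}]}$. Take any $\mu \in \lambda[\mathbb{R}]$ and choose $\eta_0$ with $\mu = \lambda(\eta_0)$. By the first step, $b'(\mu) = b'(\lambda(\eta_0)) = \eta_0$, so $\lambda(b'(\mu)) = \lambda(\eta_0) = \mu$. Combined with the first step this proves the two maps are mutual inverses. (The injectivity of $\lambda$ from Corollary \ref{cor: lambda inj} is not needed here, but does provide an alternate route: it guarantees the $\eta_0$ above is unique, which together with surjectivity onto $\lambda[\mathbb{R}]$ gives bijectivity of $\lambda$ as well.)

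There is essentially no obstacle here: once one observes that the global minimizer $\lambda(\eta)$ satisfies the critical-point equation $b'(\lambda(\eta)) = \eta$, everything else is tautological from the definition $\lambda[\mathbb{R}] = \{\lambda(\eta):\eta\in\mathbb{R}\}$. The only subtlety worth flagging is that the statement is \emph{not} saying $b':\mathbb{R}\to\mathbb{R}$ is invertible; in the non-convex case $b'$ fails to be injective on $\mathbb{R}$, and the point of the lemma is precisely that restricting $b'$ to the (in general proper, by Theorem \ref{theorem: structure of Lambda}) subset $\lambda[\mathbb{R}]$ kills this failure.
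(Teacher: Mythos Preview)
Your proof is correct and follows essentially the same approach as the paper: first use the critical-point equation to get $b'(\lambda(\eta))=\eta$, then for $\omega\in\lambda[\mathbb{R}]$ pick $\eta_0$ with $\omega=\lambda(\eta_0)$ and conclude $\lambda(b'(\omega))=\omega$. Your observation that injectivity of $\lambda$ is not actually needed is correct; the paper invokes uniqueness of $\eta_0$ but, as you note, the argument goes through with any such $\eta_0$.
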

\begin{proof}
We have already observed that $\eta = b'(\lambda(\eta))$ for all
$\eta \in \mathbb{R}$.

Thus consider $\omega \in \lambda[\mathbb{R}]$.  There exists a
unique $\nu$ such that $\omega = \lambda(\nu)$.  Since $\nu =
b'(\lambda(\nu))=b'(\omega)$, we have $\omega =
\lambda(b'(\omega))$, as desired.
\end{proof}

\begin{cor}\label{cor: lambda incr}
$\lambda:\mathbb{R}\to\lambda[\mathbb{R}]$ is increasing.
\end{cor}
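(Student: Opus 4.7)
The plan is to deduce this corollary as an immediate contrapositive consequence of Lemma \ref{lemma: lambda incr} together with the injectivity given by Corollary \ref{cor: lambda inj}. Specifically, I would take $\eta_1, \eta_2 \in \mathbb{R}$ with $\eta_1 < \eta_2$ and aim to show $\lambda(\eta_1) < \lambda(\eta_2)$.

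First I would note that $\lambda(\eta_1), \lambda(\eta_2) \in \Lambda$ since by definition $\lambda(\eta_i) \in \Lambda_{\eta_i} \subseteq \Lambda$. By injectivity (Corollary \ref{cor: lambda inj}), the two values are distinct, so it suffices to rule out $\lambda(\eta_1) > \lambda(\eta_2)$. Assume for contradiction that this strict inequality holds. Then applying Lemma \ref{lemma: lambda incr} with $\lambda_1 := \lambda(\eta_2)$ and $\lambda_2 := \lambda(\eta_1)$, we get $b'(\lambda(\eta_2)) < b'(\lambda(\eta_1))$. But by Lemma \ref{lemma: lambda and b' are inverses}, $b'(\lambda(\eta_i)) = \eta_i$, so this reads $\eta_2 < \eta_1$, contradicting our choice $\eta_1 < \eta_2$.

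This entire argument is essentially bookkeeping; there is no serious obstacle, since the real work has already been done in Lemma \ref{lemma: lambda incr}. The only subtle point worth flagging is the need to invoke injectivity first to upgrade the weak inequality produced by the contradiction argument to a strict one, and to observe that the hypothesis of Lemma \ref{lemma: lambda incr} requires both arguments to lie in $\Lambda$, which is automatic here.
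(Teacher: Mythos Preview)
Your argument is correct and is precisely the intended one: the paper states this corollary without proof immediately after Lemma~\ref{lemma: lambda incr}, Corollary~\ref{cor: lambda inj}, and Lemma~\ref{lemma: lambda and b' are inverses}, and your contrapositive deduction from those results is exactly how it follows.
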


The proof of Theorem \ref{theorem: structure of Lambda} requires a
number of additional technical lemmas.

\begin{lemma}\label{lemma: disjoint safe zones}
Take $c \in \mathcal{C}$.  If
$\omega\in(\sigma(c),\lambda(c))\setminus \Lambda_c$, then there
does not exist an $\eta$ for which $\omega\in \Lambda_\eta$.
\end{lemma}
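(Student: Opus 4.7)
The plan is to argue by contradiction: assume there is some $\eta$ with $\omega \in \Lambda_\eta$ and derive that $\omega \in \Lambda_c$, contradicting the hypothesis. Geometrically the picture is clean: $\omega \in \Lambda_\eta$ says that a supporting line of slope $\eta$ touches the graph of $b$ at $(\omega,b(\omega))$ and lies weakly below the graph, while $\sigma(c),\lambda(c) \in \Lambda_c$ says that the chord joining $(\sigma(c),b(\sigma(c)))$ and $(\lambda(c),b(\lambda(c)))$ is a supporting line of slope $c$ lying weakly below the graph. Since $\omega$ lies strictly between $\sigma(c)$ and $\lambda(c)$, compatibility of these two supporting-line inequalities should force $(\omega,b(\omega))$ onto the chord, which is exactly what $\omega \in \Lambda_c$ means.

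To execute this, I would write $\sigma = \sigma(c)$, $\lambda = \lambda(c)$, and $t = (\omega-\sigma)/(\lambda-\sigma) \in (0,1)$. The hypothesis $\omega \in \Lambda_\eta$ unpacks (via $B_\eta(\omega) \le B_\eta(x)$) to $b(x) \ge b(\omega)+\eta(x-\omega)$ for every $x$; specializing to $x=\sigma$ and $x=\lambda$ gives two inequalities. Forming the convex combination $(1-t)\cdot(\text{first}) + t\cdot(\text{second})$ makes the $\eta$-term drop out, because $(1-t)(\sigma-\omega)+t(\lambda-\omega)=0$, leaving
\[
(1-t)b(\sigma) + t\,b(\lambda) \;\ge\; b(\omega).
\]
Since $\sigma,\lambda \in \Lambda_c$ share the same value of $B_c$, I would then use the identity $b(\lambda)-b(\sigma) = c(\lambda-\sigma)$ to simplify the left-hand side to $b(\sigma)+c(\omega-\sigma)$. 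This rearranges to $B_c(\omega) \le B_c(\sigma)$, and combined with the fact that $\sigma \in \Lambda_c$ already gives $B_c(\omega) \ge B_c(\sigma)$, I conclude $\omega \in \Lambda_c$, the desired contradiction.

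I do not expect a serious obstacle. The argument rests only on the supporting-line characterization of global minimizers of $B_\eta$ together with the single algebraic identity $b(\lambda(c))-b(\sigma(c))=c(\lambda(c)-\sigma(c))$ coming from $\sigma(c),\lambda(c)\in\Lambda_c$; the polynomial structure of $b$ plays no role. The only routine verification is $t\in(0,1)$, which is immediate from $\omega\in(\sigma(c),\lambda(c))$. The delicate conceptual point is simply recognizing that the convex combination with this specific $t$ is what makes $\eta$ cancel and reduces the problem to the slope-$c$ supporting line.
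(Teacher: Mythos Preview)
Your proof is correct, and the argument is clean. The paper takes a slightly different route: rather than assuming $\omega\in\Lambda_\eta$ and forcing $\omega\in\Lambda_c$, it works directly, splitting into the cases $\eta>c$ and $\eta<c$. For $\eta>c$ one computes
\[
B_\eta(\omega)-B_\eta(\lambda(c))=(\lambda(c)-\omega)(\eta-c)+B_c(\omega)-B_c(\lambda(c))>0,
\]
using the strict inequality $B_c(\omega)>B_c(\lambda(c))$ that comes from $\omega\notin\Lambda_c$; the case $\eta<c$ is symmetric with $\sigma(c)$ in place of $\lambda(c)$. Your argument avoids the case split by taking the convex combination with weight $t=(\omega-\sigma)/(\lambda-\sigma)$ to annihilate the $\eta$-term, which is a neat geometric move (the supporting-line picture you describe). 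Both arguments exploit only the affine dependence of $B_\eta$ on $\eta$ together with $B_c(\sigma(c))=B_c(\lambda(c))$; yours trades the two-case computation for the identification of the right $t$, and gives the same conclusion with essentially the same amount of work.
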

\begin{proof}
Since $\omega$ is not a location of the global minimum of $B_c$,
$$-\omega c+b(\omega)> -\sigma(c)c+b(\sigma(c)) \quad\text{and}\quad -\omega c+b(\omega)>-\lambda(c) c + b(\lambda(c)) .$$
Since $\sigma(c)< \omega < \lambda(c)$, if $\eta >c$,
\begin{eqnarray*}
B_{\eta}(\omega)-B_{\eta}(\lambda(c))&=&-\eta \omega
+b(\omega)+\eta \lambda(c)
-b(\lambda(c))\\
&=&(-\omega+\lambda(c))[c+(\eta - c)]+b(\omega)-b(\lambda(c))\\
&=&B_c(\omega)-B_c(\lambda(c))+[\lambda(c) -\omega][\eta-c]>0.
\end{eqnarray*}
Similarly, for $\eta<c$, $B_{\eta}(\omega)-B_{\eta}(\sigma(c))>0$.
We conclude that there is no $\eta \in \mathbb{R}$ for which
$\omega$ is the location of the global minimum of $B_{\eta}$.
\end{proof}

\begin{cor}\label{cor: disjoint safe zones}
If $\eta_1 \ne \eta_2$, then $\Lambda_{\eta_1} \cap
\Lambda_{\eta_2}=\emptyset$.  Furthermore, if  $c_1,c_2 \in
\mathcal{C}$ with $c_1\ne c_2$, $[\sigma(c_1),\lambda(c_1))\cap
[\sigma(c_2),\lambda(c_2))=\emptyset$.
\end{cor}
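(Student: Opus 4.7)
The plan is to derive both statements as quick consequences of the structural facts already established about $\Lambda$, especially Lemma \ref{lemma: lambda incr}.

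For the first assertion, I would argue directly from the fact that every member of $\Lambda_\eta$ is an interior global minimum of the polynomial $B_\eta(\lambda) = -\eta\lambda + b(\lambda)$, hence a critical point. Differentiating, any $\omega \in \Lambda_\eta$ must satisfy $b'(\omega) = \eta$. Consequently, if some $\omega$ belonged simultaneously to $\Lambda_{\eta_1}$ and $\Lambda_{\eta_2}$, then $\eta_1 = b'(\omega) = \eta_2$, contrary to assumption.

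For the second assertion, I would exploit the monotonicity of $b'$ on $\Lambda$. Take $c_1, c_2 \in \mathcal{C}$ with $c_1 \ne c_2$; without loss of generality, $c_1 < c_2$. All four points $\sigma(c_1), \lambda(c_1), \sigma(c_2), \lambda(c_2)$ belong to $\Lambda$ and satisfy $b'(\sigma(c_i)) = b'(\lambda(c_i)) = c_i$. If $\sigma(c_2) \le \lambda(c_1)$, the strict increase of $b'$ on $\Lambda$ (Lemma \ref{lemma: lambda incr}) forces either $c_2 = b'(\sigma(c_2)) < b'(\lambda(c_1)) = c_1$ or, in the equality case, $\sigma(c_2) = \lambda(c_1)$ and hence $c_2 = c_1$. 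Both possibilities contradict $c_1 < c_2$, so $\sigma(c_2) > \lambda(c_1)$. This places the entire interval $[\sigma(c_2), \lambda(c_2))$ strictly to the right of $[\sigma(c_1), \lambda(c_1))$, yielding disjointness.

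I do not anticipate any substantive obstacle: the first claim is essentially a one-line critical-point computation, and the second reduces to comparing four elements of $\Lambda$ via Lemma \ref{lemma: lambda incr}. The only point to watch is handling the half-open nature of the intervals, which is why I need to rule out both $\sigma(c_2) < \lambda(c_1)$ and the boundary case $\sigma(c_2) = \lambda(c_1)$ separately; both fail by the strict monotonicity of $b'$ on $\Lambda$.
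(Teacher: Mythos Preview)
Your argument is correct. The first assertion is exactly the one-line critical-point computation you give, and your derivation of the second assertion from Lemma~\ref{lemma: lambda incr} is clean and complete; once $\sigma(c_2)>\lambda(c_1)$, every point of $[\sigma(c_1),\lambda(c_1))$ lies strictly below every point of $[\sigma(c_2),\lambda(c_2))$, so disjointness follows.

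The paper gives no explicit proof, but its labeling and placement of the corollary immediately after Lemma~\ref{lemma: disjoint safe zones} suggest that the authors had that lemma in mind for the second assertion rather than Lemma~\ref{lemma: lambda incr}. The route via Lemma~\ref{lemma: disjoint safe zones} would run as follows: assuming the half-open intervals meet, one may order so that $\sigma(c_1)<\sigma(c_2)<\lambda(c_1)$ (equality $\sigma(c_1)=\sigma(c_2)$ being excluded by the first assertion); then $\sigma(c_2)\in(\sigma(c_1),\lambda(c_1))\setminus\Lambda_{c_1}$, and Lemma~\ref{lemma: disjoint safe zones} says $\sigma(c_2)$ lies in no $\Lambda_\eta$, contradicting $\sigma(c_2)\in\Lambda_{c_2}$. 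Your approach via the strict monotonicity of $b'$ on $\Lambda$ is arguably more direct, since it avoids any case analysis on whether the overlap point lies in $\Lambda_{c_1}$ and works entirely with the four endpoints. Either method is perfectly adequate here.
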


\begin{lemma}\label{lemma: C is finite}
Let $\deg b = 2n$.  Then $|\mathcal{C}| \le n-1$.
\end{lemma}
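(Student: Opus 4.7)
The plan is to bound $|\mathcal{C}|$ by counting zeros of $b''$, which is a polynomial of degree $2n-2$ and hence has at most $2n-2$ roots. The goal is to produce, for each $c \in \mathcal{C}$, at least two distinct zeros of $b''$ lying in the open interval $(\sigma(c),\lambda(c))$, and then invoke Corollary \ref{cor: disjoint safe zones} to conclude that all $2|\mathcal{C}|$ of these zeros are distinct.

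To produce the zeros of $b''$, I would proceed as follows. Fix $c \in \mathcal{C}$. Since both $\sigma(c)$ and $\lambda(c)$ are global minimizers of $B_c$ in the interior of $\mathbb{R}$, both are critical points, so $B_c'(\sigma(c)) = B_c'(\lambda(c)) = 0$. Moreover, because $\sigma(c) < \lambda(c)$ and $B_c(\sigma(c)) = B_c(\lambda(c))$, Rolle's theorem applied to $B_c$ on $[\sigma(c),\lambda(c)]$ yields a point $\xi_c \in (\sigma(c),\lambda(c))$ with $B_c'(\xi_c) = 0$. Thus $B_c'$ has (at least) three distinct zeros
\[
\sigma(c) < \xi_c < \lambda(c).
\]
Applying Rolle's theorem to $B_c'$ on each of the subintervals $[\sigma(c),\xi_c]$ and $[\xi_c,\lambda(c)]$ produces $\alpha_c \in (\sigma(c),\xi_c)$ and $\beta_c \in (\xi_c,\lambda(c))$ with $B_c''(\alpha_c) = B_c''(\beta_c) = 0$. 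Since $B_c'' = b''$, these are two distinct roots of $b''$ lying strictly inside $(\sigma(c),\lambda(c))$.

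Now I would assemble the count. By Corollary \ref{cor: disjoint safe zones}, the half-open intervals $[\sigma(c),\lambda(c))$ are pairwise disjoint as $c$ ranges over $\mathcal{C}$, and therefore so are the open intervals $(\sigma(c),\lambda(c))$. Consequently, the collection $\{\alpha_c,\beta_c : c \in \mathcal{C}\}$ consists of $2|\mathcal{C}|$ distinct real zeros of $b''$. Since $\deg b'' = 2n-2$, we obtain $2|\mathcal{C}| \le 2n-2$, i.e., $|\mathcal{C}| \le n-1$, as claimed.

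There is no serious obstacle here; the argument is essentially a Rolle count. The only point requiring care is the disjointness used to ensure the zeros of $b''$ produced from different elements of $\mathcal{C}$ do not coincide, but this is exactly the content of the preceding corollary and is unaffected by the fact that $|\Lambda_c|$ might exceed $2$ (we only need the extremal minimizers $\sigma(c)$ and $\lambda(c)$).
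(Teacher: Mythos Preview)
Your proof is correct and follows essentially the same route as the paper: produce an interior critical point of $B_c$ via Rolle, then obtain two zeros of $b''$ in each interval $(\sigma(c),\lambda(c))$, and invoke the disjointness from Corollary~\ref{cor: disjoint safe zones} to bound $2|\mathcal{C}|\le \deg b''=2n-2$. The only cosmetic difference is that the paper phrases the second step as a sign-change argument for $b''$ (local minima at the endpoints, local maximum at the interior critical point), whereas you apply Rolle a second time to $B_c'$; your version is slightly cleaner.
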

\begin{proof}
Let $c\in\mathcal{C}$.  Then $(\sigma(c), \lambda(c))$ is
non-empty. Since $\lambda \mapsto -c\lambda+b(\lambda)$ takes the
same value at $\sigma(c)$ and $\lambda(c)$, by Rolle's theorem,
there exists $\omega_0 \in (\sigma(c),\lambda(c))$ at which
$-c+b'(\omega_0)=0$, i.e., $\omega_0$ is another critical point of
$B_c$.  Since $-c+b'(\sigma(c)+)>0$ but $-c+b'(\lambda(c)-)<0$, we
may take the point $\omega_0$ to be a local maximum of $B_c$. Thus
$B''_c=b''$ must change sign in each of $(\sigma(c),\omega_0)$ and
$(\omega_0,\lambda(c))$. Since by Corollary \ref{cor: disjoint
safe zones} the intervals in the collection
$\{\,(\sigma(c),\lambda(c)):c\in\mathcal{C}\,\}$ are disjoint, the
total number $|\mathcal{C}|$ can not exceed $\frac{1}{2}\deg
b''=n-1$.
\end{proof}

The next lemma is central, as it identifies subintervals of
$\mathbb{R}$ in which $\lambda[\mathbb{R}]$ is dense.
\begin{lemma}\label{lemma: non-empty} Let $\alpha,\eta_0
\in\mathbb{R}$.

\begin{enumerate}
\item If $a<\sigma(\eta_0)$, $(a,\sigma(\eta_0)) \cap
\lambda[\mathbb{R}]\ne\emptyset$.

\item If $\lambda(\eta_0)<a$, $(\lambda(\eta_0),a)\cap
\lambda[\mathbb{R}]\ne\emptyset$.
\end{enumerate}
\end{lemma}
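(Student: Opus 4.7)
The plan is to establish the stronger one-sided continuity statements
\[
\lim_{\eta\to\eta_0^+}\lambda(\eta)=\lambda(\eta_0),\qquad \lim_{\eta\to\eta_0^-}\lambda(\eta)=\sigma(\eta_0),
\]
from which both parts of the lemma will follow immediately. Indeed, given $a>\lambda(\eta_0)$, the first limit produces $\eta>\eta_0$ close enough to $\eta_0$ that $\lambda(\eta)<a$, while monotonicity (Corollary~\ref{cor: lambda incr}) together with injectivity (Corollary~\ref{cor: lambda inj}) forces $\lambda(\eta)>\lambda(\eta_0)$; this yields part (2). Part (1) will follow symmetrically from the second limit together with the bound $\lambda(\eta)<\sigma(\eta_0)$ for $\eta<\eta_0$ established below.

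To prove the right-hand limit, I fix any sequence $\eta_k\to\eta_0^+$ and set $\lambda_k:=\lambda(\eta_k)$. Since $b'$ is a polynomial of odd degree with positive leading coefficient, preimages under $b'$ of bounded sets are bounded, so the relation $b'(\lambda_k)=\eta_k$ forces $\{\lambda_k\}$ to be bounded. I extract any convergent subsequence $\lambda_{k_j}\to\lambda^*$. For each fixed $x\in\mathbb{R}$, the minimizing inequality $B_{\eta_{k_j}}(\lambda_{k_j})\le B_{\eta_{k_j}}(x)$ passes to the limit by joint continuity of $(\eta,\lambda)\mapsto B_\eta(\lambda)$, yielding $B_{\eta_0}(\lambda^*)\le B_{\eta_0}(x)$. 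Hence $\lambda^*\in\Lambda_{\eta_0}$, so $\lambda^*\le\lambda(\eta_0)$. Combined with $\lambda_k>\lambda(\eta_0)$ for every $k$, this forces $\lambda^*=\lambda(\eta_0)$; since every convergent subsequence of the bounded sequence $\{\lambda_k\}$ has the same limit, the full sequence converges to $\lambda(\eta_0)$.

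For the left-hand limit, the same compactness argument works once I verify that $\eta<\eta_0$ implies $\lambda(\eta)<\sigma(\eta_0)$. This follows from the rearrangement used in Lemma~\ref{lemma: lambda incr}, now applied to $\sigma(\eta_0)$ in place of $\lambda(\eta_0)$: adding $B_{\eta_0}(\lambda(\eta))\ge B_{\eta_0}(\sigma(\eta_0))$ and $B_{\eta}(\sigma(\eta_0))\ge B_{\eta}(\lambda(\eta))$ gives $(\eta-\eta_0)(\lambda(\eta)-\sigma(\eta_0))\ge 0$, so $\lambda(\eta)\le\sigma(\eta_0)$; equality is excluded because it would place $\sigma(\eta_0)$ in $\Lambda_\eta\cap\Lambda_{\eta_0}$, contradicting Corollary~\ref{cor: disjoint safe zones}. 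With this bound in hand, any subsequential limit $\lambda^*$ of $\lambda(\eta)$ as $\eta\to\eta_0^-$ lies in $\Lambda_{\eta_0}$ and satisfies $\lambda^*\le\sigma(\eta_0)$, which forces $\lambda^*=\sigma(\eta_0)$.

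The main subtlety is the asymmetric behavior of $\lambda(\cdot)$ at a point $\eta_0\in\mathcal{C}$: the function is right-continuous there with right limit $\lambda(\eta_0)$, but has a jump on the left with left limit $\sigma(\eta_0)<\lambda(\eta_0)$. Correctly distinguishing these two limits is the only genuine obstacle, and the argument handles it by pairing the compactness/lower-semicontinuity step on each side with the appropriate strict monotonicity: $\lambda(\eta)>\lambda(\eta_0)$ for $\eta>\eta_0$ on the right, and the auxiliary inequality $\lambda(\eta)<\sigma(\eta_0)$ for $\eta<\eta_0$ on the left.
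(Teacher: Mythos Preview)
Your argument is correct, and it is genuinely different from the paper's. You establish the stronger one-sided continuity facts $\lim_{\eta\to\eta_0^+}\lambda(\eta)=\lambda(\eta_0)$ and $\lim_{\eta\to\eta_0^-}\lambda(\eta)=\sigma(\eta_0)$ via a sequential compactness argument: boundedness of $\{\lambda(\eta_k)\}$ from properness of $b'$, subsequential limits landing in $\Lambda_{\eta_0}$ by passing to the limit in the minimizing inequality, and pinning down the exact limit using the monotonicity bounds $\lambda(\eta)>\lambda(\eta_0)$ (right side) and $\lambda(\eta)<\sigma(\eta_0)$ (left side). The paper instead works directly with a fixed $a$: it chooses a compact interval $[-L,a]$ using Lemma~\ref{lemma: asy behavior of lambda}, obtains a uniform positive lower bound for $B_{\eta_0}(\omega)-B_{\eta_0}(\sigma(\eta_0))$ on that interval, perturbs $\eta$ explicitly to preserve the inequality, and then locates $\lambda(\eta)$ in $(a,\sigma(\eta_0))$ by elimination. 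Your route is shorter, avoids the explicit $\varepsilon$-bookkeeping and the appeal to the asymptotic lemma, and yields a sharper conclusion (the precise one-sided limits) that in fact renders much of the later case analysis in the proof of Theorem~\ref{theorem: structure of Lambda} unnecessary. The paper's argument, on the other hand, is more self-contained in that it does not rely on the rearrangement trick or on Corollary~\ref{cor: disjoint safe zones} to rule out $\lambda(\eta)=\sigma(\eta_0)$.
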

\begin{proof}
We note that $\eta_0$ need not be an element of $\mathcal{C}$.  If
it is not, $\sigma(\eta_0)=\lambda(\eta_0)$.

We prove the first statement.  The proof of the second is similar.
If $\omega < \sigma(\eta_0)$, then
\begin{equation}
B_{\eta_0}(\omega)>B_{\eta_0}(\sigma(\eta_0)).
\end{equation}
Fix $a<\sigma(\eta_0)$.  Since $B_{\eta_0}$ is continuous, for any
$L>0$ satisfying $-L<a$, there exists $d$ (depending on $L$) such
that for all $\omega \in [-L,a]$,
$$B_{\eta_0}(\omega)\ge d >B_{\eta_0}(\sigma(\eta_0)) \iff B_{\eta_0}(\omega)-B_{\eta_0}(\sigma(\eta_0))\ge d-B_{\eta_0}(\sigma(\eta_0)):=\alpha >0.$$
We choose $L$ as follows: Since, by Lemma \ref{lemma: asy behavior
of lambda}, $\lambda(\eta) \to -\infty$ as $\eta \to -\infty$,
there exists $\eta^*<\eta_0 -1$ satisfying
$\lambda(\eta^*)<-|a|-1$. Set $-L:=\lambda(\eta^*)$.

Set $\varepsilon = \min\{1, \frac{\alpha}{2\sigma(\eta_0)+L}\}$. We
claim that for all $\eta \in (\eta_0-\varepsilon, \eta_0)$ and
$\omega \in [-L,a]$,
\begin{equation}\label{lower bound on rectangle}
B_\eta(\omega)>B_\eta(\sigma(\eta_0)).
\end{equation}
Indeed, since
$B_\eta(\omega)=B_{\eta_0}(\omega)-(\eta-\eta_0)\omega$ and
$B_\eta(\sigma(\eta_0))=B_{\eta_0}(\sigma(\eta_0))-(\eta -
\eta_0)\sigma(\eta_0)$,
\begin{eqnarray*}
B_\eta(\omega)-B_\eta(\sigma(\eta_0))&=&B_{\eta_0}(\omega)-B_{\eta_0}(\sigma(\eta_0))+(\eta-\eta_0)(\sigma(\eta_0)-\omega)\\
&\ge&\alpha -\varepsilon(\sigma(\eta_0)-\omega)\\
&\ge&\alpha -\frac{\alpha}{2(
\sigma(\eta_0)+L)}(\sigma(\eta_0)+L)=\frac{\alpha}{2}.
\end{eqnarray*}
This proves \eqref{lower bound on rectangle}.

Finally, we claim that if $\eta \in (\eta_0-\varepsilon, \eta_0)$,
then $\lambda(\eta) \in (a, \sigma(\eta_0))$.  Since $\eta <
\eta_0$ and $\lambda$ is an increasing function,
$\lambda(\eta)<\lambda(\eta_0)$. By Lemma \ref{lemma: disjoint
safe zones} this forces $\lambda(\eta)<\sigma(\eta_0)$.  Since
$B_\eta(\lambda(\eta))<B_\eta(\sigma(\eta_0))$, by \eqref{lower
bound on rectangle}, $\lambda(\eta)\notin [-L,a]$. If
$\lambda(\eta)$ were less than $-L=\lambda(\eta^*)$, then the fact
that $\lambda$ is increasing would imply $\eta<\eta^*<\eta_0-1$,
which is false.  We conclude that
$\lambda(\eta)\in(a,\sigma(\eta_0))$.
\end{proof}

We are now ready to prove Theorem \ref{theorem: structure of
Lambda}.
\begin{proof}
That $\lambda[\mathbb{R}] \subseteq \mathbb{R}\setminus
\bigcup_{c\in\mathcal{C}} [\sigma(c),\lambda(c))$ follows from
Lemma \ref{lemma: disjoint safe zones} and Corollary \ref{cor:
disjoint safe zones}.

Next we prove $\mathbb{R} \setminus \bigcup_{c \in
\mathcal{C}}[\sigma(c), \lambda(c)) \subseteq
\lambda[\mathbb{R}]$. Suppose $\mathcal{C}\ne\emptyset$. Since
$|\mathcal{C}|$ is finite, we may order the elements of
$\mathcal{C}$ so that $c_i < c_{i+1}$, $1 \le i \le k-1$.  The
left-hand set is made up of three kinds of intervals: two
semi-infinite intervals $(-\infty, \sigma(c_1))$ and
$[\lambda(c_k), \infty)$, and (if $k \ge 2$) the intervals
$[\lambda(c_i), \sigma(c_{i+1}))$. We must show that every
$\omega$ in one of these intervals is in $\lambda[\mathbb{R}]$.

Consider first the case in which $k \ge 2$ and $\omega \in
(\lambda(c_i), \sigma(c_{i+1}))$.  Set
$$U:=(\lambda(c_i),\omega)\cap \lambda[\mathbb{R}], \quad\quad V:=(\omega, \sigma(c_{i+1}))\cap\lambda[\mathbb{R}], \quad\quad \nu:=\inf\{\,b'(\lambda):\lambda\in V\,\}.$$
By Lemma \ref{lemma: non-empty}, $V \ne \emptyset$ and hence $\nu$
is defined.  We claim $\omega = \lambda(\nu)$.

If $\lambda(\eta) \in V$, then
$\lambda(\eta)<\sigma(c_{i+1})<\lambda(c_{i+1})$, and the
monotonicity of $b'$ on $\lambda[\mathbb{R}]$ implies
$$\nu \le b'(\lambda(\eta))<b'(\lambda(c_{i+1}))=c_{i+1}.$$
Furthermore, since $U$ contains some $\lambda(\eta_0)$, for
$\lambda(\eta)\in V$,
$\lambda(c_i)<\lambda(\eta_0)<\omega<\lambda(\eta)$, so that $c_i
< \nu$.  It follows from the monotonicity of $\lambda(\cdot)$ that
$\lambda(c_i) < \lambda(\nu)< \sigma(c_{i+1})$. Thus either
$\lambda(\nu)\in U$, $\lambda(\nu) \in V$, or
$\lambda(\nu)=\omega$.

Suppose $\lambda(\nu)\in U$. By Lemma \ref{lemma: non-empty},
$(\lambda(\nu),\omega)\cap \lambda[\mathbb{R}]$ is not empty.  It
thus contains $\lambda(\eta_0)$ for some $\eta_0 > \nu$.  But then
$\eta_0$ would be a lower bound for $\{\,b'(\lambda):\lambda \in
V\,\}$, contradicting the definition of $\nu$. Thus $\lambda(\nu)
\notin U$.

Suppose $\lambda(\nu) \in V$. Since $\nu \notin \mathcal{C}$,
$\lambda(\nu)=\sigma(\nu)$.  Consider $(\omega, \sigma(\nu))\cap
\lambda[\mathbb{R}]$.  By Lemma \ref{lemma: non-empty}, this is
not empty, and thus there exists $\lambda(\eta_0)$ in this set,
hence in $V$, with
$\eta_0=b'(\lambda(\eta_0))<b'(\lambda(\nu))=\nu$. This
contradicts the fact that $\nu$ is a lower bound for
$\{\,b'(\lambda):\lambda \in V\,\}$. Thus $\lambda(\nu)\notin V$.
We conclude that $\lambda(\nu)=\omega$.

The proof in the case of the semi-infinite intervals $(-\infty,
\sigma(c_1))$ and $(\lambda(c_k),\infty)$ is virtually identical.
If $|\mathcal{C}|=0$, one can take $\sigma(c_1)(=\lambda(c_1))$
arbitrarily large since $\lambda(\eta)\to\infty$ as $\eta \to
\infty$ to conclude that $\lambda[\mathbb{R}]=\mathbb{R}$.
\end{proof}

This theorem, together with Lemma \ref{lemma: C is finite}, yields
the following:
\begin{cor}
$b$ is convex on $\mathbb{R}\setminus
\bigcup_{c\in\mathcal{C}}[\sigma(c),\lambda(c))$.
\end{cor}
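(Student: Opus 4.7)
My plan is to combine Theorem \ref{theorem: structure of Lambda} with Lemma \ref{lemma: lambda incr}. By Theorem \ref{theorem: structure of Lambda}, the set $\mathbb{R}\setminus\bigcup_{c\in\mathcal{C}}[\sigma(c),\lambda(c))$ coincides with the image $\lambda[\mathbb{R}]$. Corollary \ref{cor: disjoint safe zones} and Lemma \ref{lemma: C is finite} together tell us that the collection $\{[\sigma(c),\lambda(c)):c\in\mathcal{C}\}$ is a finite, pairwise disjoint family of half-open intervals, so $\lambda[\mathbb{R}]$ is itself a finite disjoint union of (possibly semi-infinite) intervals $I_1,\dots,I_m$.

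Next, I would show that $b'$ is strictly increasing on each $I_j$. Since every point of $\lambda[\mathbb{R}]$ lies in $\Lambda$, given any two points $\lambda_1<\lambda_2$ in $I_j\subseteq\lambda[\mathbb{R}]\subseteq\Lambda$, Lemma \ref{lemma: lambda incr} immediately yields $b'(\lambda_1)<b'(\lambda_2)$. Finally, I would pass from monotonicity of $b'$ on $I_j$ to non-negativity of $b''$: if $b''(x_0)<0$ for some interior $x_0\in I_j$, continuity of $b''$ would force $b'$ to be strictly decreasing on a neighborhood of $x_0$ contained in $I_j$, contradicting the previous step. Hence $b''\ge 0$, and therefore $b$ is convex, on each $I_j$.

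I do not anticipate any serious obstacle: with Theorem \ref{theorem: structure of Lambda} and Lemma \ref{lemma: lambda incr} in hand, the argument amounts to bookkeeping. The only point that wants a little care is that $\lambda[\mathbb{R}]$ need not be a single interval, so the convexity conclusion must be understood componentwise across the finitely many intervals whose union it is; the disjointness in Corollary \ref{cor: disjoint safe zones} and the finiteness in Lemma \ref{lemma: C is finite} are precisely what make this interpretation unambiguous.
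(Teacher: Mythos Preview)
Your argument is correct. The paper does not spell out a proof; it merely remarks that the corollary follows from Theorem \ref{theorem: structure of Lambda} together with Lemma \ref{lemma: C is finite}. Your route through Lemma \ref{lemma: lambda incr}---establishing that $b'$ is strictly increasing on each component of $\lambda[\mathbb{R}]$ and then deducing $b''\ge 0$---is valid, but there is a more direct path that avoids Lemma \ref{lemma: lambda incr} altogether and matches the paper's citations exactly: by Theorem \ref{theorem: structure of Lambda}, every $\omega$ in the set equals $\lambda(\eta)$ for some $\eta$, so $\omega$ is a location of the global minimum of the polynomial $B_\eta$, and the second derivative test gives $b''(\omega)=B_\eta''(\omega)\ge 0$ immediately. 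Lemma \ref{lemma: C is finite} serves only to guarantee that the set decomposes into finitely many intervals, on each of which this pointwise inequality yields convexity. Your approach buys strict monotonicity of $b'$ along the way; the direct approach is shorter and uses one fewer lemma.
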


\section{\label{section: estimates for I} Estimates for $\int_{-\infty}^\infty e^{-2\tau p_\eta(\xi)}\,d\xi$}

Recall that $p_\eta(\xi) = -\eta \xi +
b(\xi+\lambda(\eta))-b(\lambda(\eta))$.  In what follows, we will
sometimes suppress the dependence of $p$ and $\lambda$ on $\eta$.
Within this section, we define
\begin{equation}\label{def of I in section 4}
I:=\int_{-\infty}^\infty e^{-2\tau p_\eta(\xi)}\,d\xi.
\end{equation}

In the paper \cite{GH:11}, we obtained sharp estimates on
integrals of the form $I$ that are uniform in the coefficients of
$p$ {\it under the hypothesis} that $p$ has degree four.  We
showed there that those estimates do not generalize to polynomials
of higher degree.  The estimates obtained below are less precise
but are nonetheless sufficient to prove our results on absolute
convergence of the integral defining Szeg\"o kernel.

\subsection{Estimates for $I$ for convex $p$.}
As in the fourth-degree setting, our analysis makes use of known
results on the integral of $e^{-p}$ over intervals on which $p$ is
convex.  Such results do {\it not} require $p$ to have degree
four. We recall the main result here:

\begin{lemma}[Lemma 4.9, \cite{GH:11}] \label{lemma: for convex p} Let $n$ be a positive integer and define
$p(\xi)=\displaystyle{\sum_{j=2}^{2n} \beta_j \xi^j}$. Suppose $p$
is convex on $J$, where $J$ is one of the intervals
$(-\infty,\infty)$, $(0,\infty)$, or $(-\infty,0)$. Then
\begin{equation}\label{est. for convex p}
\int_J e^{-p(\xi)}\,d\xi \approx
\left[\sum_{j=2}^{2n}|\beta_j|^{\frac{1}{j}} \right]^{-1}.
\end{equation}
\end{lemma}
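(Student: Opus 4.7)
The plan is to estimate $\int_J e^{-p}$ by identifying it with the length of the sub-level set $\{\,\xi\in J: p(\xi)\le 1\,\}$ and then showing that this length is comparable to $(\sum_j |\beta_j|^{1/j})^{-1}$ with constants depending only on $n$. First I would reduce to the one-sided case $J=(0,\infty)$: the substitution $\xi\mapsto -\xi$ converts the case $J=(-\infty,0)$ to this case while leaving both the integral and $\sum|\beta_j|^{1/j}$ invariant (since it sends $\beta_j$ to $(-1)^j\beta_j$), and the two-sided case follows by splitting at $0$ and applying the one-sided estimate to each half, since the coefficients of $p$ are unchanged by restriction.

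So assume $J=(0,\infty)$ and $p\not\equiv 0$ (otherwise both sides are infinite). Convexity together with $p(0)=p'(0)=0$ forces the top nonzero coefficient to be strictly positive, so $p$ is nondecreasing on $[0,\infty)$ and tends to $\infty$; let $A>0$ be the unique solution of $p(A)=1$. I would then show $\int_0^\infty e^{-p}\approx A$. For the lower bound, convexity and $p(0)=0$, $p(A)=1$ give $p(\xi)\le \xi/A$ on $[0,A]$, so the integrand is at least $e^{-1}$ there; for the upper bound, $\int_0^A e^{-p}\le A$, and on $[A,\infty)$ convexity yields $p(\xi)\ge 1 + p'(A)(\xi-A)$ with $p'(A)\ge 1/A$ (by applying the mean value theorem on $[0,A]$ and using monotonicity of $p'$), so $\int_A^\infty e^{-p}\le e^{-1}A$.

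The remaining task is to show $A^{-1}\approx \sum_j |\beta_j|^{1/j}$ with constants depending only on $n$. Under the dilation $\xi\mapsto A\xi$ both quantities scale by the same factor, so I may normalize $A=1$, i.e., $p(1)=1$, and must show $\sum_j|\beta_j|^{1/j}\approx 1$. The lower bound is elementary: from $\sum_j\beta_j=p(1)=1$, pigeonhole produces some $j_0$ with $\beta_{j_0}\ge 1/(2n-1)$, hence $\sum_j|\beta_j|^{1/j}\ge |\beta_{j_0}|^{1/j_0}\ge c_n>0$. For the upper bound I would use convexity to show $0\le p(\xi)\le \xi\le 1$ on $[0,1]$ (the first inequality from $p'(0)=0$ and monotonicity of $p'$, the second because $p$ lies below the secant line from $(0,0)$ to $(1,1)$); then finite-dimensionality of the space of polynomials of degree $\le 2n$ with $\beta_0=\beta_1=0$ yields an equivalence between the sup-norm on $[0,1]$ and the coefficient max-norm, forcing $|\beta_j|\le C_n$ and hence $\sum_j|\beta_j|^{1/j}\le C_n'$.

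I expect the main obstacle to lie in the upper-bound half of the final step. The coefficient bound must be extracted purely from the behavior of $p$ on $[0,1]$, where convexity provides tight two-sided control, rather than from the growth of $p$ at infinity, which is not uniformly available. The clean way to keep constants dependent only on the degree is to exploit the scaling invariance to reduce to $A=1$ \emph{before} invoking finite-dimensional norm equivalence; an alternative route is a compactness argument on the set of polynomials $\{p:p(1)=1, p\text{ convex on }[0,\infty)\}$, which is compact precisely because a limiting polynomial vanishing at both $0$ and $1$ and convex in between must be identically zero.
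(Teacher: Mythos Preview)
Your argument is correct. Note, however, that the paper does not actually supply a proof of this lemma: immediately after the statement it says only that ``This lemma follows from work of Bruna, Nagel, and Wainger \cite{BrNagWa:88}. A more detailed discussion, including variations and proofs, can be found in Section 4.2 of \cite{GH:11}.'' So there is no paper-proof to compare against directly.

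Your route is a clean, self-contained one: reduce to $J=(0,\infty)$, identify $\int_0^\infty e^{-p}$ with the unique level $A$ solving $p(A)=1$ via the two-sided bound $e^{-1}A\le\int_0^\infty e^{-p}\le (1+e^{-1})A$, and then pin down $A^{-1}\approx\sum_j|\beta_j|^{1/j}$ by scaling to $A=1$ and invoking norm equivalence on the finite-dimensional space of degree-$\le 2n$ polynomials with vanishing constant and linear terms. The cited work of Bruna--Nagel--Wainger obtains such estimates in a more geometric framework (comparing sub-level sets of convex functions with ``balls'' defined via Taylor coefficients), which is what the authors of \cite{GH:11} adapt; your approach trades that machinery for an elementary compactness/norm-equivalence step, at the cost of producing constants that are not explicit. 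Either way the conclusion and the degree-only dependence of the implied constants are the same.
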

This lemma follows from work of Bruna, Nagel, and Wainger
\cite{BrNagWa:88}.  A more detailed discussion, including
variations and proofs, can be found in Section 4.2 of
\cite{GH:11}.

\subsection{A lower bound for $I$ for non-convex $p$.}
By construction, $p_\eta$ vanishes to at least second order at the
origin.  Thus
$$p_\eta(\xi)=\sum_{j=2}^{2n} \frac{p^{(j)}(0)}{j!}\xi^j=\sum_{j=2}^{2n} \frac{b^{(j)}(\lambda(\eta))}{j!} \xi^j \le \frac{1}{2}\sum_{j=2}^{2n} |b^{(j)}(\lambda(\eta))||\xi|^j, $$
and (suppressing the dependence of $\lambda$ on $\eta$)
\begin{eqnarray}
I &\ge& \int_{-\infty}^\infty e^{-\tau \sum_{j=2}^{2n}
|b^{(j)}(\lambda)||\xi|^j}\,d\xi\nonumber\\
&\ge&\int_{0}^\infty e^{-\sum_{j=2}^{2n}
 \tau |b^{(j)}(\lambda)|\xi^j}\,d\xi\nonumber\\
 &\approx&\left[\sum_{j=2}^{2n}\tau^{\frac{1}{j}}
 |b^{(j)}(\lambda)|^{\frac{1}{j}}\right]^{-1}\label{lower bound on I},
\end{eqnarray}
where in the last line we have used Lemma \ref{lemma: for convex
p} applied to $\xi \mapsto \sum_{j=2}^{2n} \tau |b^{(j)}(\lambda)|
\xi^j$, which is clearly a convex polynomial on $(0,\infty)$.

\subsection{An upper bound for $I$ for non-convex $p$.}
An upper bound for $I$ will give rise to a lower bound for the
factor $[N(\eta,\tau)]^{-1}$ appearing in the integrand for the
Szeg\"o kernel.  These estimates are therefore necessary for the
results on the divergence of the integral $S[(x,y,t),(r,y,t)]$. We
will see in Section \ref{section: proof of thms} that for $M$
sufficiently large, the contribution to $S$ from
$\{(\eta,\tau):|\eta|>M, \tau
>0\}$ is finite for any $x$ and $r$.  Thus when
$S[(x,y,t),(r,y,t)]$ is divergent, it is because there exists some
finite $\eta_0$ for which the contribution to $S$ from
$\{\,(\eta,\tau): \eta_0 < \eta <\eta_0+\varepsilon, \tau >0\,\}$
is infinite.  The following proposition is therefore sufficient to
establish these results.

\begin{prop} \label{prop: upper bound on I}
Fix $\eta_0\in\mathbb{R}$ and $\varepsilon > 0$.  Then there
exists $c:=c(\eta_0,\varepsilon )$ such that for all $\eta \in
(\eta_0,\eta_0+\varepsilon)$ and for all $\tau>0$,
\begin{equation}\label{upper bound for I (local)}
I \le c \frac{1+\tau^\frac{1}{2}}{\tau^\frac{1}{2}}.
\end{equation}
\end{prop}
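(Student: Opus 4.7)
The plan is to establish the upper bound directly, by splitting the integral into a bounded piece around the origin (where the trivial estimate $e^{-2\tau p_\eta(\xi)}\le 1$ is used) and a tail (where the leading term $\xi^{2n}/(2n)$ of $p_\eta$ furnishes uniform decay). The crucial observation is that on the bounded interval $(\eta_0,\eta_0+\varepsilon)$ the quantity $\lambda(\eta)$ stays bounded: $\lambda$ is increasing by Corollary~\ref{cor: lambda incr} and takes finite values at each $\eta$, so there is $K=K(\eta_0,\varepsilon)$ with $|\lambda(\eta)|\le K$ throughout. Consequently, writing
$$p_\eta(\xi)=\sum_{j=2}^{2n}\frac{b^{(j)}(\lambda(\eta))}{j!}\xi^j,$$
the lower-order coefficients are bounded uniformly in $\eta$ by constants $B_j=B_j(K)$, while the leading coefficient is the $\eta$-independent value $\tfrac{1}{2n}$.

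Next, select $M=M(\eta_0,\varepsilon)$ large enough that
$$\sum_{j=2}^{2n-1}\frac{B_j}{j!}|\xi|^j \le \frac{|\xi|^{2n}}{4n}\qquad\text{for all }|\xi|>M,$$
which forces $p_\eta(\xi)\ge |\xi|^{2n}/(4n)$ uniformly on the tail. Then the two pieces of $I$ are estimated separately:
$$\int_{|\xi|\le M} e^{-2\tau p_\eta(\xi)}\,d\xi \le 2M, \qquad \int_{|\xi|>M} e^{-2\tau p_\eta(\xi)}\,d\xi \le 2\int_0^\infty e^{-\tau \xi^{2n}/(2n)}\,d\xi = C\tau^{-1/(2n)},$$
the last equality by the substitution $u=(\tau/(2n))^{1/(2n)}\xi$. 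Summing gives $I\le 2M+C\tau^{-1/(2n)}$.

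A routine case split at $\tau=1$ converts this into the stated bound. For $\tau\ge 1$, $\tau^{-1/(2n)}\le 1$, so $I\le 2M+C$; for $\tau\le 1$, both $\tau^{-1/(2n)}\le \tau^{-1/2}$ (since $n\ge 1$) and $1\le\tau^{-1/2}$ hold, so $I\le (2M+C)\tau^{-1/2}$. Either way,
$$I\le (2M+C)\bigl(1+\tau^{-1/2}\bigr)=(2M+C)\frac{1+\tau^{1/2}}{\tau^{1/2}},$$
establishing the claim with $c=2M+C$.

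The entire argument is elementary; the only real delicacy is ensuring that the lower bound on $p_\eta$ in the tail is uniform in $\eta$, and this is immediate from the boundedness of $\lambda$ on the finite interval $(\eta_0,\eta_0+\varepsilon)$. Notably, the proof does not distinguish between $\eta\in \mathcal{C}$ and $\eta\notin\mathcal{C}$, nor does it use whether $p_\eta$ has a secondary zero: the bound is much cruder than the sharp estimates of \cite{GH:11} in the quartic setting, but as the introduction to this section indicates, it suffices for the subsequent absolute convergence and divergence arguments.
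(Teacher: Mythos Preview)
Your proof is correct and takes a genuinely different route from the paper's. The paper factors $p_\eta$ over $\mathbb{R}$ as $\frac{1}{2n}\xi^2\prod_{j=2}^n[(\xi-h_j(\eta))^2+k_j^2(\eta)]$, proves a separate technical lemma (via a contradiction/subsequence argument) that the real parts $h_j(\eta)$ of the roots are uniformly bounded on $[\eta_0,\eta_0+\varepsilon]$, and then on the tail $|\xi|>C$ replaces each quadratic factor $(\xi-h_j)^2$ by a positive constant so that only the factor $\xi^2$ survives; the resulting Gaussian integral gives $\tau^{-1/2}$ directly. You instead bound the \emph{coefficients} of $p_\eta$ via the boundedness of $\lambda(\eta)$ (an immediate consequence of monotonicity), isolate the fixed leading term $\xi^{2n}/(2n)$ on the tail, and obtain $\tau^{-1/(2n)}$, which you then coarsen to $\tau^{-1/2}$ for $\tau\le 1$. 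Your argument is shorter and more elementary, since it bypasses the root-location lemma entirely; the paper's approach, by contrast, yields the exponent $\tau^{-1/2}$ on the tail without a subsequent case split, though this buys nothing for the final statement because the central piece is $O(1)$ in both proofs.
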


We begin by factoring $p_\eta$. For fixed $\eta$, this is a
non-negative polynomial vanishing to even order at the origin, its
real roots are of even multiplicity, and its non-real roots occur
in complex conjugate pairs. Its factorization over $\mathbb{C}$
may therefore be written
\begin{equation}\label{factorization over C}
p_\eta(\xi)=\frac{1}{2n}\xi^2\prod_{j=2}^n[\xi-\alpha_j(\eta)][\xi-\overline{\alpha_j(\eta)}],
\end{equation}
where the $\alpha_j$ may be real and need not be distinct.
Furthermore, if $\alpha_j(\eta)=h_j(\eta)+ik_j(\eta)$, we order
the roots so that $h_2(\eta)\le h_3(\eta)\le \ldots \le
h_n(\eta)$. The factorization of $p_\eta$ over $\mathbb{R}$ is
thus
\begin{equation}\label{factorization into quadratics}
p_\eta(\xi)=\frac{1}{2n}\xi^{2}\prod_{j=2}^{n}[(\xi-h_j(\eta))^2+k^2_j(\eta)].
\end{equation}
In what follows, we denote the $j$-th quadratic factor in the
above product by $q_j(\xi,\eta)$.

Since the $h_j$ are functions of $\eta$ and we seek estimates for
$I$ that are valid for all $\eta$ throughout an interval, we need
a lemma on the local behavior of the $h_j$:

\begin{lemma}\label{lemma: local bddness of h_j}
Fix $\eta_0 \in \mathbb{R}$ and $\varepsilon >0$.  Then there
exists $C>1$ such that for all $\eta \in J:=
[\eta_0,\eta_0+\varepsilon]$ and for all $j$, $|h_j(\eta)|\le
C-1$.
\end{lemma}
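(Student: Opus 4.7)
The plan is to reduce the boundedness of the $h_j(\eta)$ to an elementary bound on the roots of a monic polynomial in terms of its coefficients. The key preliminary observation is that, since $\lambda(\eta)$ is a critical point of $B_\eta$, Taylor-expanding $b$ about $\lambda(\eta)$ gives
$$p_\eta(\xi) = \sum_{j=2}^{2n} \frac{b^{(j)}(\lambda(\eta))}{j!}\xi^j,$$
so the $2n-2$ roots $\alpha_j(\eta), \overline{\alpha_j(\eta)}$ (for $j = 2, \ldots, n$) in the factorization \eqref{factorization over C} are precisely the roots of the monic degree-$(2n-2)$ polynomial $2n \cdot p_\eta(\xi)/\xi^2$. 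Once the coefficients of this polynomial are shown to be uniformly bounded as $\eta$ ranges over $J$, the result will follow from a standard estimate such as Cauchy's bound: for a monic polynomial $\xi^d + c_{d-1}\xi^{d-1} + \cdots + c_0$, every root has magnitude at most $1 + \max_{0 \le i < d}|c_i|$.

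To verify the uniform bound on coefficients, I would first invoke Corollary \ref{cor: lambda incr}, which asserts that $\lambda$ is increasing on $\mathbb{R}$. This gives $\lambda(\eta_0) \le \lambda(\eta) \le \lambda(\eta_0 + \varepsilon)$ for every $\eta \in J$, so $\lambda(\eta)$ is confined to a compact interval; set $M_\lambda := \max\{|\lambda(\eta_0)|, |\lambda(\eta_0+\varepsilon)|\}$. Since each $b^{(j)}$ is a polynomial for $j = 2, \ldots, 2n-1$, the quantities $|b^{(j)}(\lambda(\eta))/j!|$ are then uniformly bounded on $J$ by constants $K_j$ depending only on $\eta_0$, $\varepsilon$, and the coefficients of $b$. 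The leading coefficient $b^{(2n)}(\lambda(\eta))/(2n)! = 1/(2n)$ is constant, so after dividing by it the monic polynomial $2n \cdot p_\eta(\xi)/\xi^2$ has coefficients bounded by $2n \cdot K_{j+2}$ for $0 \le j \le 2n-3$.

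Applying Cauchy's bound then yields a constant $C > 1$, depending only on $\eta_0$, $\varepsilon$, and $b$, such that $|\alpha_j(\eta)| \le C - 1$ for every $j$ and every $\eta \in J$. Since $h_j(\eta) = \real \, \alpha_j(\eta)$, this gives $|h_j(\eta)| \le |\alpha_j(\eta)| \le C-1$, as required. I do not anticipate any substantive obstacle: the only subtle point worth noting is that $\lambda(\eta)$ need not be continuous on $J$ (it can have jump discontinuities at the finitely many points of $\mathcal{C}$), but monotonicity alone provides the two-sided bound needed to bound the coefficients of $p_\eta$, and continuity of the roots in $\eta$ is never actually required.
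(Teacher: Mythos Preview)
Your proof is correct and takes a genuinely different, more direct route than the paper's. The paper argues by contradiction: assuming one $h_j$ is unbounded along some sequence in $J$, it uses the pointwise boundedness of $\eta\mapsto p_\eta(\xi)$ on $J$ (for each fixed $\xi$) together with the factorization into quadratics to force the other $h_{j'}$'s, along successive subsequences, to converge to the distinct values $1,2,\ldots,n-1$. Once all remaining factors are pinned down, evaluating at $\xi=n$ yields a contradiction with the boundedness of $p_\eta(n)$.

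Your approach instead bounds the \emph{coefficients} of the monic polynomial $2n\,p_\eta(\xi)/\xi^2$ uniformly in $\eta\in J$, using the Taylor expansion $p_\eta(\xi)=\sum_{j=2}^{2n}\frac{b^{(j)}(\lambda(\eta))}{j!}\xi^j$ and the monotonicity of $\lambda$ (Corollary~\ref{cor: lambda incr}) to confine $\lambda(\eta)$ to a compact interval; Cauchy's bound then controls all roots, hence all $h_j$. This is shorter, gives an explicit constant, and sidesteps the iterated subsequence extraction entirely. The paper's argument, by contrast, uses only the qualitative fact that $p_\eta(\xi)$ is bounded for each fixed $\xi$ (which it derives from the same boundedness of $\lambda$ plus continuity of $b^*$), and would apply in settings where the coefficients are not so transparently expressible. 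Your remark that continuity of $\lambda$ is unnecessary, and that monotonicity alone suffices, is exactly right.
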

\begin{proof}
This is a standard argument. Suppose the result fails, so that for
some $j$, $h_j$ is unbounded on $J$. Assume without loss of
generality that $j=2$. Let $(\eta_\ell)$ be a sequence in $J$ for
which $|h_2(\eta_\ell)| \to \infty$.  By extracting subsequences
if necessary, we may assume that $(\eta_\ell)$ converges to some
$\eta' \in J$.

Recall that, by definition, $p_\eta(\xi)=-\eta(\xi +
\lambda(\eta))+b(\xi+\lambda(\eta))-b^*(\eta)$.  Since $b,b^*$ are
continuous and $\lambda$ is bounded on $J$, for $\xi$ fixed, there
exists $M_\xi >0$ such that for all $\eta \in J$, $0 \le
p_\eta(\xi)\le M_\xi$.  Fix $\xi=1$.  Then for all $\ell$,
$$0\le \frac{1}{2n}\prod_{j=2}^n q_j(1,\eta_\ell)\le M_1 .$$
Since $\lim_{\ell\to\infty}q_2(1,\eta_\ell)=\infty$, $\lim_{\ell
\to \infty} \prod_{j=3}^n q_j(1,\eta_\ell)=0$.  Thus there exists
a factor $q_{j_1}$ and a subsequence $(\eta^{(1)}_\ell)$ such that
$\lim_{\ell \to \infty}q_{j_1}(1,\eta^{(1)}_\ell)=0$. This forces
$\lim_{\ell \to \infty} h_{j_1}(\eta^{(1)}_\ell)=1$.

Now take $\xi=2$. It is still the case that $\lim_{\ell to \infty
}q_2(2,\eta^{(1)}_\ell)=\infty$, but now for $\ell$ sufficiently
large, $q_{j_1}(2,\eta^{(1)}_\ell)$ is bounded away from zero.
These facts, together with the  boundedness of $p_\eta(2)$ on $J$,
allow us to find a different factor $q_{j_2}$ and a subsequence
$(\eta^{(2)}_\ell)$ of $(\eta^{(1)}_\ell)$ such that
$q_{j_2}(2,\eta^{(2)}_\ell) \to 0$ and $h_{j_2}(\eta^{(2)}_\ell)
\to 2$.  Repeating this process at most $n-2$ times leads to a
subsequence $(\nu_\ell)$ of the original such that
$h_{j_i}(\nu_\ell)$ tends to $i$.

Fix $\xi=n$.  There exists $M_n$ such that $0\le p_\eta(n)\le M_n$
for all $\eta \in J$. Furthermore, each $q_{j}(n, \nu_\ell)$,
$2\le j \le n$ is bounded away from zero for $\ell$ sufficiently
large, but $q_1(n,\nu_\ell)$ is unbounded.  This is a
contradiction, and the lemma is proved.
\end{proof}

We now prove Proposition \ref{prop: upper bound on I}. With
notation as in the proof of Lemma \ref{lemma: local bddness of
h_j}, we find
\begin{eqnarray*}
I&=&\int_{-\infty}^\infty \exp\left(-\frac{\tau}{n}\xi^2
\prod_{j=2}^n [(\xi-h_j)^2+k^2_j]\right) \,d\xi \\
&\le&\int_{-\infty}^\infty \exp\left(-\frac{\tau}{n}\xi^2
\prod_{j=2}^n (\xi-h_j)^2\right) \,d\xi.
\end{eqnarray*}
Write this last integral as the sum of integrals $I_1$, $I_2$, and
$I_3$, where $I_1$ is over the interval $(-\infty ,C)$, $I_2$ is
over $[-C, C]$, and $I_3$ is over  $(C,\infty)$.

Since $-(C-1)\le h_j(\eta) \le C-1$ for each $j$ and for all $\eta
\in J$,
\begin{eqnarray*}
I_1 &\le&\int_{-\infty}^{-C} \exp\left(-\frac{\tau}{n}\xi^2
\prod_{j=2}^n (-C-h_j)^2\right) \,d\xi\\
&\le&\int_{-\infty}^\infty \exp\left(-\frac{\tau}{n}\xi^2
\prod_{j=2}^n (-C-h_j)^2\right) \,d\xi\\
&\approx& \left(\frac{\tau}{n}\prod_{j=2}^n
(C+h_j)^2\right)^{-\frac{1}{2}}\\
&\approx& \tau^{-\frac{1}{2}}
\end{eqnarray*}
since $2C-1 \ge C+h_j(\eta) \ge 1$ for all $j$ and for all $\eta
\in J$.

We make the simplest possible estimate of $I_2$; since the
integrand is less than 1,
$$I_2 \le 2C\approx 1. $$

We estimate $I_3$ in the same way as $I_1$, using now the fact
that for all $j$, $h_j(\eta) < C -1$ for all $\eta\in J$.
\begin{eqnarray*}
I_3&\le&\int_C^\infty \exp\left(-\frac{\tau}{n}\xi^2
\prod_{j=2}^n (C-h_j)^2\right) \,d\xi\\
&\le&\int_{-\infty}^\infty \exp\left(-\frac{\tau}{n}\xi^2
\prod_{j=2}^n (C-h_j)^2\right) \,d\xi\\
&\approx& \left(\frac{\tau}{n}\prod_{j=2}^n
(C-h_j)^2\right)^{-\frac{1}{2}}\\
&\approx& \tau^{-\frac{1}{2}}.
\end{eqnarray*}
Putting the three estimate together yields $I \lesssim
\max\{\tau^{-\frac{1}{2}}, 1 \} \approx
\frac{1+\tau^{\frac{1}{2}}}{\tau^\frac{1}{2}}$, as claimed.

\section{\label{section: proof of thms} Proofs of Theorems}

If we show that for all non-negative integers $i_1$,$j_1$, $i_2$
and $j_2$, each integral
\[
\int\!\!\!\int_{\tau>0} e^{\eta \tau[z_1+\bar{w}_1] +i\tau[z_2 -
\bar{w}_2] }\frac{\eta^{i_1+j_1}
\tau^{i_1+j_1+i_2+j_2+1}}{N(\eta,\tau)} \,d\eta\,d\tau
\]
is absolutely convergent in the region in which
\[
h+k+b(x)+b(r)-2b^{**}\left( \frac{x+r}{2}\right)>0,
\]
it will follow that this integral is in fact equal to
$\partial^{i_1}_{z_1}
\partial^{j_1}_{\bar{w}_1}\partial^{i_2}_{z_2}
\partial^{j_2}_{\bar{w}_2}S(z,w)$.

Set $\delta=h+k$, $(z_1,z_2)=(x+iy,t+ib(x)+ih)$,
$(w_1,w_2)=(r+is,u+ib(r)+ik)$, $s=i_1+j_1$, and
$m=i_1+j_1+i_2+j_2$ (so that $m \ge s$). The integral becomes
\begin{equation}\label{S(s,m,delta)}
S^{s,m,\delta}:=\int \!\!\! \int_{\tau>0} e^{\eta \tau
[x+r+i(y-s)]+i\tau[t-u+i(b(x)+b(r)+\delta)]}\frac{\eta^{s}
\tau^{m+1}}{N(\eta,\tau)}\,d\eta\,d\tau,
\end{equation}
and it converges absolutely if and only if
\begin{equation}\label{S tilde}
\widetilde{S}^{s,m,\delta} := \int_{-\infty}^\infty
\!\int_{0}^\infty e^{-\tau[\delta +b(x)+b(r)-\eta(x+r)]}
\frac{|\eta|^s \tau^{m+1}}{N(\eta,\tau)} \,d\tau\,d\eta < \infty.
\end{equation}
From \eqref{lower bound on I},
\begin{eqnarray*}
\lefteqn{\widetilde{S}^{s,m,\delta}}& &\\
&=&\int_{-\infty}^\infty \!\int_{0}^\infty e^{-\tau[\delta
+b(x)+b(r)-\eta(x+r)]}
\frac{|\eta|^s \tau^{m+1}}{e^{2\tau b^*(\eta)}I} \,d\tau\,d\eta\\
&\ge&\sum_{j=2}^{2n}\int_{-\infty}^\infty \!\int_{0}^\infty
e^{-\tau[\delta +b(x)+b(r)-\eta(x+r)+2b^*(\eta)]} |\eta|^s
\tau^{m+1+\frac{1}{j}}|b^{(j)}[\lambda(\eta)]|^{\frac{1}{j}}
\,d\tau\,d\eta\\
&:=&\sum_{j=2}^{2n}\int_{-\infty}^\infty
I_j^{s,m,\delta}(\eta)\,d\eta\\
&:=&\sum_{j=2}^{2n} I_j^{s,m,\delta}.
\end{eqnarray*}
Further, set $A(x,r,\eta):=b(x)+b(r)-\eta(x+r)+2b^*(\eta)$.  If
$\delta+A(x,r,\eta)>0$, we can evaluate the $\tau$ integral.
\begin{equation}\label{I_j(eta)}
I^{s,m,\delta}_j(\eta) \approx \frac{|\eta|^s
|b^{(j)}(\lambda)|^{\frac{1}{j}}}{[\delta +
A(x,r,\eta)]^{m+2+\frac{1}{j}}}.
\end{equation}
We see that there are two possible barriers to the convergence of
the integrals $I^{s,m,\delta}_j$:
\begin{enumerate}
\item insufficient decay of $I_j^{s,m,\delta}(\eta)$ for fixed
$x$, $r$ as $|\eta| \to \infty$;

\item vanishing of $\delta + A(x,r,\eta)$ at some finite $\eta$
for certain choices of $x$, $r$, and $\delta$.
\end{enumerate}
We deal with these in turn.

\subsection{Behavior of $I_j^{s,m,\delta}(\eta)$ for large $|\eta|$.}

\begin{lemma}\label{lemma: delta+A}
 Fix $x,r\in\mathbb{R}$, $\delta >0$.  Then
$$\delta + A(x,r,\eta) \sim  \left(\frac{2n-1}{n}\right)\eta^{\frac{2n}{2n-1}}, \quad |\eta| \to \infty.$$
\end{lemma}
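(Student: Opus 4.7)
The plan is to substitute the definition of $A$ into $\delta + A(x,r,\eta)$ and then isolate the dominant term using the asymptotic for $b^*$ already established in Lemma \ref{lemma: asy behavior of b*}. Explicitly, for fixed $x$, $r$, and $\delta$,
\[
\delta + A(x,r,\eta) = \bigl[\delta + b(x) + b(r)\bigr] - \eta(x+r) + 2b^*(\eta),
\]
so the question reduces to determining which of the three groups of terms on the right-hand side controls the growth as $|\eta| \to \infty$.

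The first step is to observe that, for $x$, $r$, and $\delta$ fixed, the bracketed constant is $O(1)$ and the linear term $-\eta(x+r)$ is $O(|\eta|)$. The second step is to invoke Lemma \ref{lemma: asy behavior of b*}, which yields
\[
2 b^*(\eta) \;\sim\; \frac{2n-1}{n}\,\eta^{\frac{2n}{2n-1}} \quad \text{as } |\eta| \to \infty.
\]
Since $n \ge 2$ gives $\tfrac{2n}{2n-1} > 1$, both $O(1)$ and $O(|\eta|)$ are $o\!\left(\eta^{\frac{2n}{2n-1}}\right)$. Dividing $\delta + A(x,r,\eta)$ by $\tfrac{2n-1}{n}\eta^{2n/(2n-1)}$ and letting $|\eta| \to \infty$ therefore sends the contributions of the bracket and of the linear term to zero, while the ratio $2b^*(\eta) \big/ \bigl[\tfrac{2n-1}{n}\eta^{2n/(2n-1)}\bigr]$ tends to $1$ by Lemma \ref{lemma: asy behavior of b*}. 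This yields the claimed asymptotic.

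There is essentially no obstacle here: the only subtlety is the interpretation of $\eta^{\frac{2n}{2n-1}}$ for $\eta<0$, but since $2n-1$ is odd the expression is a well-defined real number (equal to $|\eta|^{2n/(2n-1)}$), consistent with the sign convention already used in Lemma \ref{lemma: asy behavior of b*}. The lemma follows by elementary comparison of growth rates.
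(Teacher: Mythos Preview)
Your argument is correct and matches the paper's own proof essentially line for line: both expand $A$ by definition, invoke Lemma~\ref{lemma: asy behavior of b*} for $2b^*(\eta)$, and absorb the constant and linear terms as lower order. Your extra remarks on why $O(1)$ and $O(|\eta|)$ are $o(\eta^{2n/(2n-1)})$ and on the sign convention for $\eta^{2n/(2n-1)}$ simply make explicit what the paper leaves implicit.
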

\begin{proof} This follows immediately from Lemma \ref{lemma: asy behavior of
b*}, since
\begin{eqnarray*}
\delta + A(x,r,\eta)&=&\delta + b(x)+b(r)-\eta(x+r)+2b^*(\eta)\\
&=& \left(\frac{2n-1}{n}\right)\eta^{\frac{2n}{2n-1}}(1+o(1))
\end{eqnarray*}
as $|\eta| \to \infty$.
\end{proof}

We may use Lemmas \ref{lemma: asy b^j} and \ref{lemma: delta+A} to
estimate the integrals for large $|\eta|$:
\begin{eqnarray*}
|I_j^{s,m,\delta}(\eta)| &\sim& c \frac{|\eta|^s }{\eta^{\frac{2n(m+2)}{2n-1}}}\frac{|\eta|^{\frac{2n-j}{j(2n-1)}}}{\eta^{\frac{2n}{j(2n-1)}}}\\
    &=&c |\eta|^{-2-(m-s)-\frac{m+3}{2n-1}}
\end{eqnarray*}
as $|\eta|\rightarrow \infty$. Since $m \ge s \ge 0$,
$-2-(m-s)-\frac{m+3}{2n-1}<-2$. Thus for any fixed $s$, $m$, $j$,
and $\delta>0$, $I^{s, m, \delta}_j$ is convergent at infinity.

\subsection{Vanishing of $\delta + A(x,r,\eta)$} In light of the
previous section, we see that whether or not one of the integrals
$I_j$ converges depends on whether or not the function $\eta
\mapsto \delta+A(x,r,\eta)$ vanishes at a finite $\eta_0$ for some
fixed $x$, $r$, $\delta$, and the behavior of this function near
such zeros.  In fact, we have proved
\begin{prop} If for some fixed $x$, $r$, and $\delta$
\begin{equation}\label{inf strictly pos}
\inf_\eta \delta+A(x,r,\eta)>0,
\end{equation}
then all the $I^{s,m,\delta}_j$ are finite.
\end{prop}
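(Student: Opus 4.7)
The plan is straightforward because most of the technical work has been completed in the two preceding subsections; the proposition is essentially a bookkeeping consequence. Under the hypothesis, $\inf_\eta[\delta+A(x,r,\eta)] > 0$, so in particular $\delta+A(x,r,\eta)>0$ for every $\eta\in\mathbb{R}$, and formula \eqref{I_j(eta)} is valid on all of $\mathbb{R}$. It therefore suffices to show
\[
\int_{-\infty}^{\infty} \frac{|\eta|^s\,|b^{(j)}(\lambda(\eta))|^{1/j}}{[\delta+A(x,r,\eta)]^{m+2+1/j}}\,d\eta < \infty
\]
for each $j=2,\ldots,2n$. I would split this integral at a large but finite $M$ into a compact piece and a tail, and treat each separately.

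For the tail $|\eta|>M$, the asymptotic calculation already performed at the end of Section 5.1 --- which relies only on Lemmas \ref{lemma: asy b^j} and \ref{lemma: delta+A} --- shows that $|I_j^{s,m,\delta}(\eta)| \lesssim |\eta|^{-2-(m-s)-(m+3)/(2n-1)}$ as $|\eta|\to\infty$. Since $m \ge s \ge 0$ and $n \ge 2$, the exponent is strictly less than $-2$, so the tail contribution is finite. This step makes no use of the hypothesis.

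For the compact piece $|\eta|\le M$, the hypothesis provides a uniform lower bound $c_0 := \inf_\eta[\delta+A(x,r,\eta)] > 0$, whence the denominator satisfies $[\delta+A(x,r,\eta)]^{m+2+1/j} \ge c_0^{m+2+1/j}$. For the numerator, recall that by Corollary \ref{cor: lambda incr} the function $\lambda$ is increasing, so $\lambda([-M,M]) \subseteq [\lambda(-M),\lambda(M)]$ is a bounded set. Since $b^{(j)}$ is a polynomial, $|b^{(j)}(\lambda(\eta))|^{1/j}$ is bounded on $[-M,M]$, and of course $|\eta|^s \le M^s$. Thus the integrand is uniformly bounded on this compact set, and its integral is finite. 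Combining the tail and compact estimates yields $I_j^{s,m,\delta} < \infty$ for every $j$.

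There is no real obstacle here; the hypothesis is invoked only to rule out the second of the two potential barriers to convergence identified at the start of this section --- a finite zero of $\eta\mapsto\delta+A(x,r,\eta)$ --- while the first, insufficient decay at infinity, was already controlled unconditionally in Section 5.1.
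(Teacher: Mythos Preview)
Your proof is correct and matches the paper's approach: the paper does not give a separate proof but simply states ``In fact, we have proved'' the proposition, treating it as an immediate consequence of the tail decay established in the preceding subsection together with the evident local boundedness of the integrand when the denominator is bounded away from zero. You have spelled out precisely this implicit argument, including the one point that deserves care --- that $\lambda$ is bounded on compact sets (via Corollary~\ref{cor: lambda incr}) so that the numerator is bounded there even though $\lambda$ need not be continuous.
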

Furthermore,
\begin{eqnarray*}
\inf_\eta \delta+A(x,r,\eta)&=&\delta
+b(x)+b(r)-2\sup\left[\eta\left(\frac{x+r}{2}\right)-b^*(\eta)\right]\\
&=&\delta+b(x)+b(r)-2 b^{**}\left(\frac{x+r}{2}\right),
\end{eqnarray*}
where the convexity of $b^{*}$ and its super-linear growth at
infinity (Lemma \ref{lemma: asy behavior of b*}) guarantee the
finiteness of the supremum in the first line.  It follows that the
integrals defining the Szeg\"o kernel and all of its derivatives
converge absolutely in the region in which
$$\delta+b(x)+b(r)-2b^{**}\left(\frac{x+r}{2} \right)>0.$$ This is precisely the region defined in \eqref{region for abs conv}.  To prove
the remainder of Theorem \ref{theorem: absolute convergence}, we
must use the results of Section \ref{section: local properties of
B} to identify points
$(z,w)=[(z_1,z_2),(w_1,w_2)]=[(x+iy,t+i(b(x)+h)),(r+is,u+i(b(r)+k))]$
satisfying \eqref{region for abs conv}.

If $(z,w)\in (\Omega \times \overline{\Omega})\cup
(\overline{\Omega} \times \Omega)$, $\delta >0$.  Since
$A(x,r,\eta)\ge 0$, such $(z,w)$ are indeed in the region
\eqref{region for abs conv}.  We turn our attention, then, to
points $(z,w)\in \partial \Omega \times \partial \Omega$, where
$\delta = 0$.

Set
\begin{equation}\label{Ax and Ar}
A_x(\eta):=b^*(\eta)-[\eta x -b(x)] \quad\quad
A_r(\eta):=b^*(\eta)-[\eta r - b(r)],
\end{equation}
so that
\begin{equation}\label{A}
A(x,r,\eta)=A_x(\eta)+ A_r(\eta).
\end{equation}
Fix $x$ and $r$, and recall the definition of $\Lambda_{\eta_0}$
from \eqref{Lambda}. $A(x,r,\eta_0)=0$ if and only if
$A_x(\eta_0)=A_r(\eta_0)=0$. This, in turn, happens precisely when
$x,r\in\Lambda_{\eta_0}$.

From Lemma \ref{lemma: asy behavior of b*} and Proposition
\ref{prop: properties of b^*}, it follows that $A(x,r,\cdot)$ is a
continuous function of $\eta$ which grows at infinity like
$c|\eta|^{\frac{2n}{2n-1}}$.  Thus if for some fixed $x$ and $r$
it does not vanish, it is bounded below by a positive constant.
Together with Lemma \ref{lemma: delta+A} this shows that if
$(z,w)\in (\partial \Omega \times \partial \Omega) \setminus
\Sigma$, the integrals defining the Szeg\"o kernel and all its
derivatives are absolutely convergent.

Finally, we turn to the proof of Theorem \ref{theorem: x=-r
singularity}.  We must consider the integrals $S^{0,0,\delta}$ and
$\widetilde{S}^{0,0,\delta}$ from \eqref{S(s,m,delta)} and
\eqref{S tilde}.  To simplify notation, we drop the additional
superscripts. We must show
\begin{enumerate} \item[(i)]  $\widetilde{S}^0$ is
divergent, and

\item[(ii)] $\lim_{\delta \to 0^+} \widetilde{S}^{\delta}=\infty$,
\end{enumerate}
whenever there exists $\eta_0$ such that $x,r\in\Lambda_{\eta_0}$.
Clearly (i) implies (ii) since the integrand of
$\widetilde{S}^\delta$ is non-negative and converges pointwise and
monotonically to the integrand of $\widetilde{S}^0$ as $\delta \to
0^+.$ We thus consider (i).

Fix $\eta_0\in\mathbb{R}$, $\varepsilon>0$, and $x,r\in
\Lambda_{\eta_0}$. Applying Proposition \ref{prop: upper bound on
I},
\begin{equation}
\frac{\tau^{\frac{1}{2}}}{1 +\tau^{\frac{1}{2}}} \lesssim
e^{2\tau b^*(\eta)}N(\eta,\tau)^{-1},
\end{equation}
for all $\tau>0$ and $\eta\in(\eta_0,\eta_0+\varepsilon)$.

Substituting into \eqref{S tilde} and recalling the definition of
$A(=A(x,r,\eta))$ from \eqref{A} gives
\begin{eqnarray}
\widetilde{S}^0&>&\int_{\eta_0}^{\eta_0+\varepsilon}\!\!\!\int_0^\infty \tau e^{-\tau A}e^{2\tau b^*(\eta)} N(\eta,\tau)^{-1}\,d\tau\,d\eta\nonumber\\
&\gtrsim&\int_{\eta_0}^{\eta_0+\varepsilon}\!\!\!\int_0^\infty  \frac{\tau^{\frac{3}{2}} e^{-\tau A}}{1 +\tau^{\frac{1}{2}} }\,d\tau\,d\eta\nonumber\\
&=&\int_{\eta_0}^{\eta_0+\varepsilon}\!\!\!\int_0^\infty \frac{
e^{-\tau} \tau^{\frac{3}{2}}
}{A^2[A^{\frac{1}{2}}+\tau^{\frac{1}{2}}]}\, d\tau\,
d\eta.\label{lower bound on S-tilde-0}
\end{eqnarray}

It is now clear that we need a lemma on the order of vanishing of
$A(x,r,\eta)$ at $\eta_0$.
\begin{lemma}\label{lemma: vanishing of A}
Take $b$ as in \eqref{our b}, $\eta_0\in\mathbb{R}$, and $x\in
\Lambda_{\eta_0}$. Then
\begin{equation*}
A_x(\eta)=(\eta-\eta_0)F_x(\eta)
\end{equation*}
for all $\eta \in (\eta_0,\infty)$, where $F_x$ is bounded on each
interval $(\eta_0,\eta_0+\varepsilon)$.
\end{lemma}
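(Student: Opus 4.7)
The plan is to exploit the convexity of $b^{*}$ to reduce the claim to a standard statement about secant slopes of convex functions from their minimum. First I would record the basic identities: by the definition of $b^{*}$, one always has $A_x(\eta)=b^{*}(\eta)-[\eta x-b(x)]\ge 0$, and the hypothesis $x\in\Lambda_{\eta_0}$ means exactly that $b^{*}(\eta_0)=\eta_0 x-b(x)$, so that $A_x(\eta_0)=0$. Hence $\eta_0$ is a global minimizer of $A_x$ with value $0$.

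Next I would observe that $A_x$ is itself convex on $\mathbb{R}$: as a function of $\eta$ (for fixed $x$), the map $\eta\mapsto \eta x-b(x)$ is affine, so $A_x$ differs from $b^{*}$ by an affine function, and $b^{*}$ is convex by Proposition \ref{prop: properties of b^*}. Continuity of $A_x$ is also inherited from that of $b^{*}$.

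Now define
\[
F_x(\eta):=\frac{A_x(\eta)}{\eta-\eta_0}\quad\text{for }\eta>\eta_0,
\]
which automatically gives the desired factorization $A_x(\eta)=(\eta-\eta_0)F_x(\eta)$. The standard three-slope lemma for convex functions tells us that for $\eta_0<\eta_1<\eta_2$,
\[
\frac{A_x(\eta_1)-A_x(\eta_0)}{\eta_1-\eta_0}\le \frac{A_x(\eta_2)-A_x(\eta_0)}{\eta_2-\eta_0},
\]
and since $A_x(\eta_0)=0$, this is precisely $F_x(\eta_1)\le F_x(\eta_2)$. Thus $F_x$ is nondecreasing on $(\eta_0,\infty)$. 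Consequently, for any $\eta\in (\eta_0,\eta_0+\varepsilon)$,
\[
0\le F_x(\eta)\le F_x(\eta_0+\varepsilon)=\frac{A_x(\eta_0+\varepsilon)}{\varepsilon}<\infty,
\]
where finiteness of $A_x(\eta_0+\varepsilon)$ uses the continuity of $b^{*}$. This proves the boundedness claim and completes the proof.

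There is really no substantial obstacle here; the only thing one must be careful about is to note why $F_x$ cannot be unbounded as $\eta\to\eta_0^{+}$, and that is exactly where the monotonicity of secant slopes from the minimum is used (rather than any differentiability of $b^{*}$ at $\eta_0$, which may fail when $\eta_0\in\mathcal{C}$).
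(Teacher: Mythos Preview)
Your proof is correct and noticeably cleaner than the paper's. The paper proceeds by an explicit algebraic computation: it writes
\[
A_x(\eta)=(\eta-\eta_0)(\lambda(\eta)-x)-(\lambda(\eta)-x)\,\phi_x(\eta),
\qquad
\phi_x(\eta)=\frac{B_{\eta_0}(\lambda(\eta))-B_{\eta_0}(x)}{\lambda(\eta)-x},
\]
uses the minimality of $B_{\eta_0}$ at $x$ to get $\phi_x\ge 0$, then rewrites $F_x(\eta)=(\lambda(\eta)-x)\bigl[1-\phi_x(\eta)/(\eta-\eta_0)\bigr]$ and bounds this using the non-negativity of $A_x$ together with the local boundedness and monotonicity of $\lambda(\cdot)$ established in Section~\ref{section: local properties of B}. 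In contrast, you bypass $\lambda(\cdot)$ entirely: since $A_x$ differs from the convex function $b^*$ by an affine map, it is convex with global minimum $0$ at $\eta_0$, and the three-slope inequality immediately makes $F_x(\eta)=A_x(\eta)/(\eta-\eta_0)$ nondecreasing, hence bounded on $(\eta_0,\eta_0+\varepsilon)$ by $A_x(\eta_0+\varepsilon)/\varepsilon$. Your route uses only Proposition~\ref{prop: properties of b^*} and no structural information about $\lambda$; the paper's route yields an explicit formula for $F_x$, but that formula is not exploited further, so nothing is lost by your simplification.
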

\begin{proof}
Since $\lambda(\eta_0)$ is the largest element of
$\Lambda_{\eta_0}$, $\lambda(\eta_0)\ge x$. Since $\lambda(\cdot)$
injective and increasing, for any $\eta > \eta_0$,
$\lambda(\eta)>x$.  Thus for $\eta > \eta_0$,
\begin{eqnarray*}
A_x(\eta)&=&b(x)-\eta x+b^*(\eta)\\
&=& b(x)-\eta x+\eta \lambda(\eta)-b(\lambda(\eta))\nonumber\\
&=&\eta(\lambda(\eta) - x)+b(x)-b(\lambda(\eta))-\eta_0(\lambda(\eta)-x)+\eta_0(\lambda(\eta)-x)\nonumber\\
&=&(\eta-\eta_0)(\lambda(\eta) - x)-(\lambda(\eta) -x)\left[\frac{b(\lambda(\eta))-b(x)-\eta_0(\lambda(\eta)-x)}{\lambda(\eta)-x}\right]\nonumber\\
&=&(\eta-\eta_0)(\lambda(\eta) - x)-(\lambda(\eta)
-x)\phi_{x}(\eta).\nonumber
\end{eqnarray*}
Observe,
$$\phi_{x}(\eta)
=\frac{b(\lambda(\eta))-\eta_0 \lambda(\eta) - \left[b(x)-\eta_0
x\right]}{\lambda(\eta)-x}=\frac{B_{\eta_0}(\lambda(\eta))
 - B_{\eta_0}(x)}{\lambda(\eta)-x}.$$
Since $x\in \Lambda_{\eta_0}$, the minimality of $B_{\eta_0}(x)$
yields $B_{\eta_0}(\lambda(\eta))
 \ge B_{\eta_0}(x)$ for all $\eta\in\mathbb{R}$.  Therefore $\phi_{x}$ is non-negative
on the interval $(\eta_0,\infty)$.  It follows that for $\eta
> \eta_0$,
\begin{equation}\label{bounded}
A_{x}(\eta)=(\eta-\eta_0)(\lambda(\eta) - x)-(\lambda(\eta) -x)
\phi_{x}(\eta)\ge 0 \iff 1\ge \frac{\phi_x(\eta)}{\eta-\eta_0}.
\end{equation}
Hence, on $(\eta_0,\infty)$,
\begin{eqnarray*}
A_x(\eta)&=&(\eta-\eta_0)(\lambda(\eta) - x)-(\lambda(\eta) -x)\phi_{x}\\
&=&(\eta-\eta_0)(\lambda(\eta) - x)\left[1-\frac{\phi_{x}(\eta)}{\eta-\eta_0}\right]\\
&:=&(\eta-\eta_0)F_x(\eta).
\end{eqnarray*}
By inequality \eqref{bounded} and the local boundedness of
$\lambda(\eta)$, $F_x$ is bounded on each interval
$(\eta_0,\eta_0+\varepsilon)$. This proves the lemma.
\end{proof}

We use this lemma to substitute for $A$ in \eqref{lower bound on
S-tilde-0}
\begin{eqnarray*}
\widetilde{S}^0&\ge&
\int_{\eta_0}^{\eta_0+\varepsilon}\!\!\!\int_0^1
\frac{ e^{-\tau} \frac{\tau^{\frac{3}{2}}}{(\eta-\eta_0)^{2}(F_x(\eta)+F_r(\eta))^2} }{(\eta-\eta_0)^{\frac{1}{2}}(F_x(\eta)+F_r(\eta))^{\frac{1}{2}}+1}\, d\tau\, d\eta\\
&=& \left(\int_0^1\tau^{\frac{3}{2}} e^{-\tau}\,
d\tau\right)\int_{\eta_0}^{\eta_0+\varepsilon}
\frac{ \frac{1}{(\eta-\eta_0)^2(F_x(\eta)+F_r(\eta))^2} }{(\eta-\eta_0)^{\frac{1}{2}}(F_x(\eta)+F_r(\eta))^{\frac{1}{2}}+1}\ d\eta\\
&\approx&\int_{\eta_0}^{\eta_0+\varepsilon}\frac{G(\eta)}{(\eta-\eta_0)^2(F_x(\eta)+F_r(\eta))^2
} \,d\eta,
\end{eqnarray*}
where $G(\eta)$ is right-continuous and bounded away from zero.
Since $F_x+F_r$ is also a locally-bounded positive function, the
divergence of the integral follows.  This completes the proof of
Theorem \ref{theorem: x=-r singularity}.

\bibliographystyle{alpha}
\bibliography{references}

\end{document}